\documentclass[11pt,a4paper]{article}

\usepackage[utf8]{inputenc}
\usepackage[T1]{fontenc}
\usepackage{lmodern}
\usepackage{amsmath,amssymb,amsthm}
\usepackage{mathtools}
\usepackage{geometry}
\usepackage{graphicx}
\usepackage{booktabs}
\usepackage{caption}
\usepackage{hyperref}
\usepackage{enumitem}
\usepackage{cleveref}
\usepackage{xcolor}
\usepackage{csquotes}
\usepackage{pgfplots}
\pgfplotsset{compat=1.18}
\usepackage{listings}
\usepackage[backend=biber,
style=numeric,
sorting=nyt,
giveninits=true,
doi=false,
isbn=false,
url=false,
eprint=false,
maxbibnames=99]{biblatex}

\usepackage[framemethod=tikz]{mdframed}
\mdfsetup{
  backgroundcolor=gray!5,
  linecolor=gray!50,
  linewidth=0.8pt,
  roundcorner=4pt,
  skipabove=1em,
  skipbelow=1em,
  innerleftmargin=10pt,
  innerrightmargin=10pt,
  innertopmargin=8pt,
  innerbottommargin=8pt
}

\usepackage{tikz}
\usetikzlibrary{arrows.meta,positioning,fit,calc, backgrounds}

\pgfdeclarelayer{background}
\pgfsetlayers{background,main}

\newcommand{\tikzmark}[1]{%
  \tikz[remember picture,overlay]\coordinate (#1);%
}

\newcounter{todo}
\theoremstyle{plain}

\newtheorem{lemma}{Lemma}
\newtheorem{proposition}{Proposition}
\newtheorem{corollary}{Corollary}

\theoremstyle{definition}

\theoremstyle{remark}
\newtheorem{remark}{Remark}

\DeclareMathOperator{\N}{\mathbb{N}}
\DeclareMathOperator{\R}{\mathbb{R}}

\title{From Computational Certification to Exact Coordinates:\\
Heilbronn's Triangle Problem on the Unit Square\\
Using Mixed-Integer Optimization}
\author{Nathan Sudermann-Merx\thanks{Institute for Computer Science, DHBW Mannheim. Email: \texttt{nathan.sudermann-merx@dhbw.de}}}
\date{\today}

\begin{document}

\maketitle

\begin{abstract}
  We develop a mixed-integer nonlinear programming (MINLP) approach for the classical Heilbronn triangle problem, demonstrating the capability of modern global optimization solvers to tackle 
  challenging combinatorial geometry problems. A symmetry-breaking strategy based on boundary structure yields a substantially stronger model: for $n=9$, we compute an $\varepsilon$-globally optimal 
  point in 15 minutes on a standard desktop computer, improving upon the previously reported effort of approximately one day. By combining numerical certification with exact symbolic computation, 
  we recover exact coordinates matching all best-known configurations for $n\le 9$, including the $n=9$ configuration of Comellas and Yebra (2002). 
  An analysis of these configurations reveals the clustering of noncritical triangle areas around a small number of distinct values, suggesting rich underlying algebraic structure. 
  All code and data are publicly available.
  \end{abstract}
    
\section{Introduction}

\subsection{The problem}

The Heilbronn triangle problem arises in discrete geometry and asks:

\begin{mdframed}
  \centering
  \emph{How should $n$ points be placed in the unit square so that the smallest triangle determined by any three of them is as large as possible?}
  \end{mdframed}

  The problem is named after the mathematician Hans Heilbronn (and not after the city in southern Germany), who was born into a German-Jewish family and left Germany in 1933 during the rise of the Nazi regime. It first appeared in print in Roth's 1951 paper~\cite{Roth1951}, where the unit square was replaced by an arbitrary closed convex region in the plane of positive measure, and the emphasis was placed on the asymptotic behavior of the minimal triangle area rather than on explicit constructions for small values of $n$.

\subsection{Upper bounds}

We denote the minimal triangle area in an optimal configuration of $n$ points in the unit square by $\Delta_n$.
Heilbronn conjectured that $\Delta_n$ should satisfy an asymptotic upper bound of the form
\[
\Delta_n = O\!\left(\frac{1}{n^2}\right).
\]
As we will see in Section~\ref{sec: lower bounds}, this turned out to be incorrect.
Roth observed in~\cite{Roth1951} that the problem admits a simple upper bound,
\[
\Delta_n \le \frac{1}{n-2}.
\]
To see this, note that the convex hull of any configuration of $n$ points can be partitioned into $n-2$ triangles with pairwise disjoint interiors and total area at most $1$, implying that at least one of these triangles must have area no greater than $1/(n-2)$.
Over the following decades, the upper bounds were steadily refined. In 1972, Schmidt~\cite{Schmidt1972} obtained a significant improvement, which was sharpened by Roth~\cite{Roth1972II,Roth1972III} later that year and further strengthened by Koml\'os, Pintz, and Szemer\'edi~\cite{KPS1981} in 1981, establishing what remained the best-known estimate for over forty years.
A major breakthrough came in 2023, when Cohen, Pohoata, and Zakharov~\cite{CPZ2023} proved that for all sufficiently large $n$,
\[
\Delta_n \le n^{-8/7 - 1/2000},
\]
which is currently the strongest known asymptotic upper bound. Figure~\ref{fig:CPZbound} illustrates that this bound provides a reasonably tight envelope for all best-known configurations with $n \ge 6$.

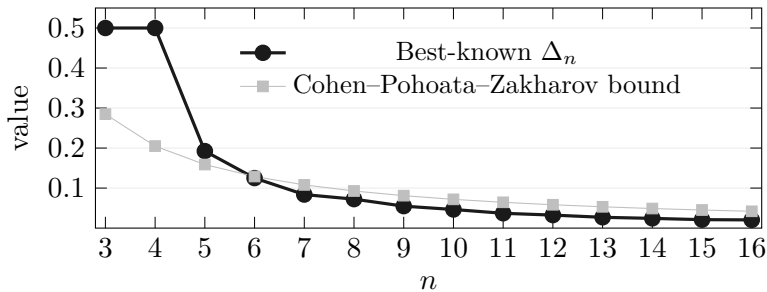
\begin{figure}[h]
  \centering
  \begin{tikzpicture}
  \begin{axis}[
      width=0.65\linewidth,
      height=4.5cm,
      xlabel={$n$},
      ylabel={value},
      xmin=2.8, xmax=16.2,
      ymin=0, ymax=0.55,
      xtick={3,4,...,16},
      ytick={0.1,0.2,...,0.5},
      ymajorgrids,
      grid style={gray!15},
      axis line style={black},
      tick style={black},
      legend style={
          at={(0.9,0.9)},
          anchor=north east,
          draw=none,
          fill=none,
          font=\small
      },
  ]
  
  \addplot[
      very thick,
      gray!20!black,
      mark=*,
      mark size=2.5pt
  ] coordinates {
  (3,0.5)
  (4,0.5)
  (5,{sqrt(3)/9})
  (6,{1/8})
  (7,0.0838)
  (8,{(sqrt(13)-1)/36})
  (9,{(9*sqrt(65)-55)/320})
  (10,0.0465)
  (11,{1/27})
  (12,0.0325)
  (13,0.0270)
  (14,0.0243)
  (15,0.0211)
  (16,{7/341})
  };
  \addlegendentry{Best-known $\Delta_n$}
  
  \addplot[
      thin,
      gray!50,
      mark=square*,
      mark size=2pt
  ] coordinates {
  (3,{pow(3,-(8/7+1/2000))})
  (4,{pow(4,-(8/7+1/2000))})
  (5,{pow(5,-(8/7+1/2000))})
  (6,{pow(6,-(8/7+1/2000))})
  (7,{pow(7,-(8/7+1/2000))})
  (8,{pow(8,-(8/7+1/2000))})
  (9,{pow(9,-(8/7+1/2000))})
  (10,{pow(10,-(8/7+1/2000))})
  (11,{pow(11,-(8/7+1/2000))})
  (12,{pow(12,-(8/7+1/2000))})
  (13,{pow(13,-(8/7+1/2000))})
  (14,{pow(14,-(8/7+1/2000))})
  (15,{pow(15,-(8/7+1/2000))})
  (16,{pow(16,-(8/7+1/2000))})
  };
  \addlegendentry{Cohen--Pohoata--Zakharov bound}
  
  \end{axis}
  \end{tikzpicture}
  \caption{Best-known values of $\Delta_n$ for the Heilbronn problem in the unit square compared with the asymptotic upper bound $n^{-8/7-1/2000}$ of Cohen, Pohoata, and Zakharov~\cite{CPZ2023}.}
  \label{fig:CPZbound}
  \end{figure}

\subsection{Lower bounds}\label{sec: lower bounds}

A simple lower bound of order $c/n^3$ for $\Delta_n$ can be obtained as follows. 
Without regard to the unit-square constraint, consider the $n$ points $(i,i^2)$ for $i=0,\ldots,n-1$. 
No three of these points are collinear, and hence every triangle determined by them has strictly positive area. 
Moreover, each triangle area can be expressed as half the absolute value of a $3\times 3$ determinant formed from the vertices' coordinates (see Section~\ref{sec:original formulation}). 
Since all coordinates are integers and the area is positive, the determinant has absolute value at least $1$, implying that every triangle has area at least $1/2$.
To place the configuration inside the unit square, scale the $x$-coordinates by $1/n$ and the $y$-coordinates by $1/n^2$. 
Areas then scale by $1/n^3$, giving
\[
\Delta_n \ge \frac{1}{2n^3}.
\]
As noted in the appendix of Roth's paper~\cite{Roth1951}, Paul Erd\H{o}s refined this idea by choosing a prime $p$ with $n \le p \le 2n$ and considering the points $(i, i^2 \bmod p)$ for $i=0,\ldots,p-1$. 
This construction preserves the key properties used above—integral coordinates and the absence of collinear triples—while having the advantage that the $y$-coordinates are bounded by $p$. 
Consequently, scaling to the unit square reduces areas only by a factor of $1/p^2$, leading to
\[
\Delta_n \ge \frac{1}{2p^2} \ge \frac{1}{8n^2}
\]
for all $n$.

Combined with Heilbronn's conjecture, this would have yielded the correct asymptotic order of magnitude for the problem. 
However, in 1982, following a suggestion by Roth that the conjecture might be false, Koml\'os, Pintz, and Szemer\'edi proved in~\cite{KPS1982} that there exists a constant $c>0$ such that
\[
\Delta_n \ge \frac{c \log n}{n^2}
\]
for infinitely many $n$, using a nonconstructive probabilistic argument. 
This gap between existence and construction was closed in 2000 by Bertram-Kretzberg, Hofmeister, and Lefmann~\cite{BHL2000}, who presented a polynomial-time algorithm producing configurations with asymptotic lower bound $\Omega(\log n/n^2)$.

\subsection{Results for small-$n$-configurations}

All best-known configurations for the Heilbronn problem with $n \le 16$ are documented on \emph{Erich's Packing Center}, a website maintained by Erich Friedman that tracks records for a variety of computational geometry problems (see~\cite{FriedmanPackingCenter}).
The first case requiring a nontrivial proof, $n=5$, was resolved in 1991 by Yang, Zhang, and Zeng~\cite{yzz91}, who proved
$\Delta_5 = \sqrt{3}/9$.
In the same work, they disproved a conjecture of Goldberg, who had suggested that optimal configurations for $n \le 7$ arise from affine images of regular $n$-gons—a statement that turns out to hold only for $n=6$.
In 1995, Dress, Yang, and Zeng~\cite{dyz1995} established the optimal value $\Delta_6 = 1/8$.

In 2002, Comellas and Yebra~\cite{comellasyebra2002} used simulated annealing followed by an analytical refinement procedure to discover best-known configurations and exact coordinates for many cases with $n \le 12$.
As we shall see, their configurations for $n=7,8,9$ have since all been proved optimal, and their configurations for $n=10$ and $n=12$ remain the best known to date.
The optimality of their configuration for $n=7$, with $\Delta_7 \approx 0.083859$, was proved in 2011 by Zeng and Chen~\cite{ZengChen2011}, who decomposed the problem into 226 nonlinear optimization subproblems.
For $n=9$, Chen, Xu, and Zeng~\cite{chen2017searching} conducted in 2017 an extensive branch-and-bound computation using CPU and GPU clusters, requiring a total of 125 days of wall-clock time to obtain sharp upper bounds.
In 2022, Dehbi and Zeng~\cite{dehbi2022heilbronn} combined numerical search with symbolic computation to prove that the configuration for $n=8$ found by Comellas and Yebra, satisfying
$\Delta_8 = (\sqrt{13}-1)/36$,
is indeed optimal.

For $n \ge 13$, the best-known configurations were found by Peter Karpov ($n=13,15$) and Mark Beyleveld ($n=14,16$) but have not been formally published; they are available 
only through personal communication with Erich Friedman and are presented in Appendix~\ref{app:best-known}.

More recently, in 2025, Monji, Modir, and Kocuk~\cite{monji2025solvingheilbronntriangleproblem} applied nonlinear global optimization techniques to compute an optimality certificate for $\Delta_9$, reducing the total wall-clock time to approximately one day using a state-of-the-art mixed-integer solver.

These results illustrate a broader trend: modern mixed-integer nonlinear programming (MINLP) solvers have matured to the point where they can tackle geometric 
optimization problems that were out of reach for general-purpose methods only a decade ago. Recent systematic studies by Berthold et al.~\cite{berthold2026global,berthold2026outoftheboxglobaloptimizationpacking}
confirm this observation across a range of combinatorial geometry problems, showing that straightforward MINLP formulations solved with off-the-shelf solvers 
can match or improve upon the best previously known solutions.  The present paper continues this line of work for the Heilbronn problem. 

\subsection{Contributions}

The contributions of this paper are fourfold.

\paragraph{A stronger mixed-integer formulation.}
We develop a mixed-integer optimization model for the Heilbronn problem that incorporates a novel symmetry-breaking strategy based on the boundary structure of optimal configurations.  This strategy fixes several point coordinates a~priori and, in addition, determines the orientation of certain triangles, which allows us to fix a number of binary sign variables before the solve.
Together, these reductions lead to a formulation that is substantially stronger than existing models.  In particular, the Heilbronn problem for $n=9$ can be solved to certified global optimality in approximately 15 minutes on a standard desktop computer, improving upon the previously reported effort of about one day~\cite{monji2025solvingheilbronntriangleproblem} by more than an order of magnitude.

\paragraph{Exact coordinates via an optimize-then-refine framework.}
We introduce a two-step methodology that combines global optimization (Step~1) with exact symbolic refinement (Step~2).  The numerical solution from the solver identifies the 
combinatorial structure of an optimal configuration---specifically, which triangles are critical and which points lie on the boundary.  This information yields a structured 
polynomial system that is then solved exactly using computer algebra.  Applying this pipeline, we derive closed-form exact coordinates for all optimal configurations 
with $n = 5, \ldots, 9$, confirming earlier results of Comellas and Yebra~\cite{comellasyebra2002} and sometimes simplifying their presentation.
\paragraph{Structural observations and new research questions.}
An empirical analysis of the certified optimal configurations reveals two notable patterns: the number of critical triangles grows steadily with $n$, and the areas of the noncritical triangles cluster into a small number of distinct values.  We document these observations and pose them as open research questions that may guide future investigations into the combinatorial geometry of extremal point sets.

\paragraph{Reproducibility and data.}
Many of the best-known configurations for the Heilbronn problem have circulated only through personal communication rather than formal publication, making independent verification difficult.  We collect all best-known configurations for $n \le 16$ in this paper and its appendix.
The complete source code, input data, and resulting solution files are publicly
available at \url{https://github.com/spiralulam/heilbronn}.

\subsection{Paper outline}

Section~\ref{sec:formulation} establishes a formal setting for the Heilbronn problem and derives structural properties of optimal configurations that are exploited in the optimization model.
Section~\ref{sec:MIPModel} develops the mixed-integer formulation and its enhancements, including a symmetry-breaking strategy based on the boundary structure of optimal configurations.
Section~\ref{sec:framework} presents the optimize-then-refine framework and illustrates it in detail for $n=7$.
Section~\ref{sec:solutions} reports the exact coordinates for $n = 5, \ldots, 9$.
Section~\ref{sec:observations} discusses the structural patterns and open questions.
Section~\ref{sec:outlook} concludes with an outlook.
Appendix~\ref{app:best-known} collects the best-known configurations for $n = 10, \ldots, 16$.

\section{Original formulation and theoretical results}\label{sec:formulation}

This section establishes a basic formulation of the Heilbronn problem in Section~\ref{sec:original formulation}, which already suffices to prove its solvability.
The theoretical results in Section~\ref{sec: boundary configuration} about the boundary points of optimal configurations and in Section~\ref{sec: triangle sign}
about a sign property of determinants are not only of theoretical interest but provide valuable structure that is exploited in the optimization model of Section~\ref{sec:MIPModel}, leading to a substantial runtime reduction.

\subsection{The original formulation and its solvability}\label{sec:original formulation}

Let $n\ge 3$ denote the number of points $p_1=(x_1,y_1),\ldots,p_n=(x_n,y_n)$ we want to place in $\R^2$, $I=\{1,\ldots,n\}$, and 
\[
T\ =\ \{(i,j,k)|\ 1\le i<j<k\le n\}\subset \N^3
\]
the set of all triangles, where each triangle $t=(i,j,k)\in T$ is identified with the indices of the defining points $p_i$, $p_j$ and $p_k$.
By construction, $|T|=\binom{n}{3}$.
\begin{remark}
  The enumeration of the points heavily influences the symmetry breaking and therefore 
   becomes a crucial modeling choice, as discussed in Section~\ref{sec: symmetry breaking}.
\end{remark}

Further, let $A_t$ be the area of triangle $t$ and $[0,1]^2$ the unit square. Then the Heilbronn problem can be formulated as the following optimization problem:
\[
P_\Delta:\qquad
\max_{x,y}\ \min_{t\in T} A_t
\quad
\text{s.t. } (x_i,y_i)\in[0,1]^2,\ i\in I.
\]
and $\Delta_n$ coincides with the optimal value of $P_\Delta$.
All further specifications of this problem amount to modeling decisions, of which there are many and which must be made carefully. A first fundamental modeling choice concerns the representation of the triangle areas. 
For a triangle $t=(i,j,k)$, the area can be expressed via the classical shoelace determinant formula,
\[
A_t
= \frac{1}{2}\left|\det
\begin{pmatrix}
x_i & y_i & 1\\
x_j & y_j & 1\\
x_k & y_k & 1
\end{pmatrix}
\right|
=
\frac{1}{2}
\left|
x_i y_j + x_j y_k + x_k y_i
-
x_i y_k - x_j y_i - x_k y_j
\right|.
\]

With this representation, $P_\Delta$ becomes a nonsmooth, nonconvex optimization problem with box constraints with the following properties.

\begin{proposition}\label{prop:solvability-positive-optval}
  The optimization problem $P_\Delta$ admits a solution for every $n$, and its optimal value $\Delta_n$ is strictly positive.
  \end{proposition}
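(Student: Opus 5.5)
The plan has two parts. Existence follows from Weierstrass' extreme value theorem. Positivity follows from exhibiting one feasible configuration with no three collinear points.

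\emph{Existence.} The feasible set of $P_\Delta$ is $([0,1]^2)^n$, a compact subset of $\R^{2n}$. For each $t=(i,j,k)\in T$, the shoelace determinant is a polynomial in $(x,y)$. Hence $A_t$, half its absolute value, is continuous. The objective $f(x,y)=\min_{t\in T}A_t$ is a minimum over the finite set $T$ of continuous functions, so it is continuous as well; this uses $|T|=\binom{n}{3}\ge 1$ since $n\ge 3$. Weierstrass' theorem then gives a maximizer, so $P_\Delta$ admits a solution and $\Delta_n$ equals the attained maximum value.

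\emph{Positivity.} Since $\Delta_n\ge f(x,y)$ for every feasible point, it suffices to exhibit one configuration with $f>0$, i.e.\ one in which no three points are collinear. I would reuse the construction from Section~\ref{sec: lower bounds}: take $p_i=\bigl(\tfrac{i-1}{n},\tfrac{(i-1)^2}{n^2}\bigr)$ for $i\in I$, which lies in $[0,1]^2$. These points lie on the parabola $y=x^2$ with pairwise distinct abscissae. For $t=(i,j,k)$ with $a,b,c$ the three distinct abscissae, the determinant becomes the Vandermonde determinant $(b-a)(c-a)(c-b)\neq 0$. Hence $A_t>0$ for every $t$, and therefore $f>0$ at this point, because a minimum of finitely many positive numbers is positive. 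As a more quantitative alternative, one can cite the estimate $\Delta_n\ge 1/(2n^3)$ derived there.

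No step is a real obstacle. The only care needed is to note that the minimum is over a finite index set, so that continuity and strict positivity both pass to $f$.
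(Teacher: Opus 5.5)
Your proof is correct and follows the paper's: existence by Weierstrass on the compact set $([0,1]^2)^n$, and positivity by exhibiting a feasible configuration with no three collinear points. The paper cites Erd\H{o}s' construction, whereas you scale the simpler points $(i,i^2)$ from the same section and check non-collinearity directly via the Vandermonde determinant; either works. You also spell out continuity of the finite minimum and nonemptiness of $T$ for $n\ge 3$, details the paper leaves implicit.
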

  
  \begin{proof}
  Existence of a solution follows from the Weierstraß theorem. 
  The objective function is continuous, and the feasible set is nonempty, closed, and bounded; hence it is compact. 
  Therefore, the maximum is attained. Strict positivity of $\Delta_n$ follows, for example, from Erd\H{o}s' construction discussed in Section~\ref{sec: lower bounds}, which provides an explicit positive lower bound.
  \end{proof}
  
  \begin{remark}
  In~\cite{Roth1951}, Roth argues that $P_\Delta$ is solvable because its feasible set \enquote{is a closed convex region}. 
  However, closedness and convexity alone do not guarantee existence of a maximizer; compactness (or an equivalent coercivity argument) is required.
  \end{remark}
  
  The following corollaries are immediate consequences of Proposition~\ref{prop:solvability-positive-optval}, since otherwise the configuration would contain a triangle of zero area.

  \begin{corollary}
  An optimal configuration for the Heilbronn problem does not contain two coincident points.
  \end{corollary}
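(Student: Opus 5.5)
The argument is by contradiction, combining Proposition~\ref{prop:solvability-positive-optval} with the shoelace formula for $A_t$. Fix $n\ge 3$ and an optimal configuration $p_1,\dots,p_n\in[0,1]^2$ of $P_\Delta$, so that $\min_{t\in T}A_t=\Delta_n$. Suppose two points coincide, say $p_i=p_j$ for some $i\neq j$. Since $n\ge 3$, we can pick a third index $k\in I\setminus\{i,j\}$. After sorting the indices, $(i,j,k)$ is a triangle $t\in T$.

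Next we show that this triangle is degenerate. In the shoelace determinant for $t$, the rows corresponding to $p_i$ and $p_j$ are both equal to $(x_i,y_i,1)$. A matrix with two identical rows has determinant zero, so $A_t=0$. Reordering the indices to increasing order only permutes rows, which changes the sign of the determinant but not its absolute value.

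It follows that the objective value of this configuration is $\min_{t\in T}A_t\le A_t=0$. Because it is optimal, this value equals $\Delta_n$, so $\Delta_n\le 0$. This contradicts the strict positivity $\Delta_n>0$ from Proposition~\ref{prop:solvability-positive-optval}. Hence no two points of an optimal configuration coincide.

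None of these steps is hard. The only points needing care are that $n\ge 3$ guarantees a third point, and that the area formula gives zero regardless of how the indices are ordered.
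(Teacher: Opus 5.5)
Your proof is correct and follows the paper's argument: the paper notes that a coincident pair would produce a zero-area triangle, which contradicts $\Delta_n>0$ from Proposition~\ref{prop:solvability-positive-optval}. You spell out the details the paper leaves implicit, namely that $n\ge 3$ supplies a third index and that the shoelace determinant with two equal rows vanishes.
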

  
  \begin{corollary}
  An optimal configuration for the Heilbronn problem does not contain three collinear points.
  \end{corollary}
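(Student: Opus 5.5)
We argue by contradiction, reducing the claim to the positivity of $\Delta_n$ from Proposition~\ref{prop:solvability-positive-optval}. Let $(p_1,\ldots,p_n)$ be an optimal configuration of $P_\Delta$, so that $\min_{t\in T}A_t=\Delta_n>0$. Suppose, contrary to the claim, that three distinct indices $i<j<k$ give collinear points $p_i,p_j,p_k$. Then $t=(i,j,k)\in T$ is one of the triangles over which the minimum is taken.

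The key step is to show $A_t=0$. Collinearity means there are a point $q$, a direction $d\in\R^2$ and scalars $\lambda_i,\lambda_j,\lambda_k$ with $p_m=q+\lambda_m d$ for $m\in\{i,j,k\}$. Subtracting the first row of the shoelace matrix from the other two leaves the determinant unchanged. The resulting rows are $(\lambda_j-\lambda_i)(d,0)$ and $(\lambda_k-\lambda_i)(d,0)$. These are linearly dependent, so the determinant vanishes and $A_t=\tfrac12\cdot 0=0$.

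Hence $\min_{t\in T}A_t\le A_t=0<\Delta_n$, which contradicts optimality. Note that this argument also covers coincident points, as in the preceding corollary: if $p_i=p_j$, then for any third index $k$ (which exists because $n\ge3$) the triple is degenerate and the same determinant argument applies.

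I expect no real obstacle. The only point that needs care is that every triple of distinct indices actually appears in $T$, so the degenerate triangle really enters the minimum. This holds because $T$ consists of all index triples with $i<j<k$, and any three distinct indices can be reordered into increasing order. Reordering permutes rows of the matrix, which changes only the sign of the determinant and not $A_t$.
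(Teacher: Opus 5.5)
Your proposal is correct and follows the paper's argument: a collinear triple gives a triangle of zero area, which contradicts $\Delta_n>0$ from Proposition~\ref{prop:solvability-positive-optval}. The paper states this in a single line, while you also check explicitly, via row reduction of the shoelace determinant, that collinear points give zero area and that every triple of distinct indices appears in $T$.
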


\subsection{Boundary structure of optimal configurations}\label{sec: boundary configuration}

Let $P=\{p_1,\dots,p_n\}\subseteq S=[0,1]^2$ be an optimal configuration
for the Heilbronn problem, and let
\[
K := \operatorname{conv}(P)
= \Bigl\{\sum_{i=1}^{n}\lambda_i\, p_i \;\Big|\; \lambda_i\ge 0,\;\sum_{i=1}^{n}\lambda_i=1\Bigr\}
\]
denote its convex hull.
Let $v_1,\dots,v_m$ be the vertices of $K$, listed in cyclic order,
so that $K = v_1 v_2 \cdots v_m$ is a convex $m$-gon.

\begin{lemma}[Minimal covering parallelogram]\label{lem:min_parallelogram}
The unit square $S=[0,1]^2$ is a minimum-area covering parallelogram of $K$.
\end{lemma}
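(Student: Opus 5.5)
The plan is to argue by contradiction, using affine invariance of area ratios. Suppose there is a parallelogram $Q \supseteq K$ with $\operatorname{area}(Q) < 1 = \operatorname{area}(S)$.

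\textbf{Step 1: map $Q$ onto the square.} Every parallelogram is the image of $S$ under an invertible affine map. So there is an affine map $\phi(z) = Mz + b$ with $\phi(Q) = S$ and $|\det M| = 1/\operatorname{area}(Q) > 1$.

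\textbf{Step 2: compare areas.} Affine maps scale all areas by the same factor $|\det M|$. Hence the configuration $P' := \phi(P)$ has every triangle area equal to $|\det M|$ times the corresponding area in $P$. Its smallest triangle therefore has area $|\det M|\,\Delta_n$.

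\textbf{Step 3: feasibility and contradiction.} Since $P \subseteq K \subseteq Q$, we get $P' \subseteq \phi(Q) = S$, so $P'$ is feasible for $P_\Delta$. By Proposition~\ref{prop:solvability-positive-optval}, $\Delta_n > 0$, which gives $|\det M|\,\Delta_n > \Delta_n$. This contradicts the optimality of $P$.

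\textbf{Step 4: attainment of the minimum.} To say $S$ is a \emph{minimum}-area covering parallelogram, the infimum over covering parallelograms must be attained. The argument above already shows that no covering parallelogram has area below $1$. Since $S$ itself covers $K$ and has area $1$, the minimum is attained at $S$, so no separate compactness argument is needed.

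The only step needing care is Step 1. There I would make explicit that for a parallelogram $Q$ with vertex $q_0$ and edge vectors $u, w$, the map $z \mapsto [u\ w]^{-1}(z - q_0)$ sends $Q$ onto $[0,1]^2$, with determinant $1/|\det[u\ w]| = 1/\operatorname{area}(Q)$. Degenerate parallelograms of zero area cannot cover $K$: $K$ has nonempty interior because optimal configurations contain no three collinear points. So $\det[u\ w] \neq 0$ and the map is well defined. Everything else is routine.
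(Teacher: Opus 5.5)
Your proposal is correct and follows the paper's proof essentially step for step: assume a covering parallelogram of area less than $1$, map it affinely onto $S$, and observe that every triangle area is multiplied by a factor greater than $1$, which contradicts the optimality of $P$. Your additional remarks make explicit points the paper leaves implicit: you use $\Delta_n>0$ to get the strict inequality, you exclude degenerate parallelograms, and you note that $S$ itself attains the minimum.
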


\begin{proof}
Assume for contradiction that there exists a parallelogram $S' \supseteq K$
with $\operatorname{area}(S') < 1$.

Since every parallelogram is an affine image of the unit square,
there exists an affine bijection $T: S' \to S$
with positive determinant satisfying
\[
|\det T| = \frac{\operatorname{area}(S)}{\operatorname{area}(S')}
= \frac{1}{\operatorname{area}(S')} > 1.
\]

Because $K \subseteq S'$, we have $T(K) \subseteq S$, hence $T(P) \subseteq S$.
For every triangle $\tau$ formed by points of $P$,
its area scales by
\[
\operatorname{area}(T(\tau))
= |\det T| \operatorname{area}(\tau).
\]
Thus
\[
\Delta(T(P))
= |\det T| \Delta(P)
> \Delta(P),
\]
contradicting the optimality of $P$.
\end{proof}

\begin{proposition}\label{thm:boundary_vertices}
Let $n\ge 5$ and let $P\subseteq[0,1]^2$ be an optimal configuration.
Then at least five distinct vertices of $K=\operatorname{conv}(P)$
lie on the boundary $\partial S$ of the unit square.
\end{proposition}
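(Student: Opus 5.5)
The plan is to combine Lemma~\ref{lem:min_parallelogram} with a first-order perturbation argument on the supporting lines of $K$. First I will show that each of the four sides of $S$ contains at least one point of $P$. If, say, no point lies on $\{y=1\}$, then $\max_i y_i<1$, and the affine map $(x,y)\mapsto (x,\,y/\max_i y_i)$ keeps $P$ inside $S$. It scales every triangle area by a factor $>1$, which contradicts optimality (equivalently, $S$ would not be a minimum covering parallelogram). The points of $P$ on $\partial S$ that matter are hull vertices of $K$. Any point of $P$ on a side of $S$ lies on a supporting line of $K$. It is therefore either a vertex of $K$ or lies in the relative interior of an edge of $K$ contained in that side, and in the latter case both endpoints of that edge are vertices of $K$ on the same side.

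Next I will analyse each pair of opposite sides, say $\{y=0\}$ and $\{y=1\}$, while keeping the vertical pair $\{x=0\},\{x=1\}$ fixed. Suppose the bottom side meets $K$ in a single vertex $a=(a_1,0)$ and the top side in a single vertex $b=(b_1,1)$. Rotate both horizontal lines by a small angle $\theta$ about $a$ and $b$ respectively. They remain supporting lines of $K$ for $|\theta|$ small in both directions, since each contact is a single vertex. Together with the vertical lines they bound a parallelogram containing $K$ whose area equals $1-(b_1-a_1)\tan\theta$. If $a_1\neq b_1$, a suitable sign of $\theta$ gives area $<1$, contradicting Lemma~\ref{lem:min_parallelogram}. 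If instead one of the two sides contains an edge of $K$ (two vertices), rotation is blocked only in one direction, and the same computation yields a sign condition rather than a contradiction. Combining the two pairs, I then count. Two vertices on a side, or a corner vertex shared by two sides, both add to the tally. The aim is to show that configurations with at most four boundary vertices force the degenerate situation: every side is touched by exactly one vertex, no vertex is at a corner, and the vertically opposite contacts are aligned ($a_1=b_1$), as are the horizontally opposite contacts.

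The main obstacle is this degenerate case. There the area function is constant to first order, so Lemma~\ref{lem:min_parallelogram} alone gives no contradiction, and I must use optimality of $P$ more directly. With four contact points $a,b$ (on a vertical line) and $c,d$ (on a horizontal line), $K$ is a convex polygon touching each side of $S$ in one point. I would first try a combined shear: the equal-area family of parallelograms gives, after mapping back to $S$, a one-parameter family of configurations with the same minimal area. All of them are optimal, so the first-order argument applies to each member. Along this family the contacts of the other pair generically become misaligned, which the previous step forbids, and this yields the contradiction. If that fails, the fallback is to exploit the slack directly. In the degenerate case every side of $S$ contains only one point, so a small simultaneous rotation about $a,b,c,d$ combined with a slight contraction gives a parallelogram of area strictly less than $1$. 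To carry this out I would check the second-order term of the area of the parallelogram formed by rotating both pairs, which I expect to be strictly negative.
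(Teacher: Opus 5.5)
\textbf{There is a genuine gap, and it cannot be closed, because the statement fails in the case you leave open.}

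\textbf{What works.} Your perturbation machinery is sound where it applies. Stated precisely, the rotation argument says that the $x$-projections of $K\cap\{y=0\}$ and $K\cap\{y=1\}$ must overlap: when a contact is an edge, you pivot about its appropriate endpoint. The same holds for the $y$-projections of the vertical pair. Your aligned case also goes through. The shear family really does misalign the other pair, by exactly $\tan\theta$. More directly, rotating the horizontal pair by $\theta$ and the vertical pair by $-\theta$ about the four contacts gives a square of area $\cos^2\theta<1$. With these overlap conditions you do get at least five boundary vertices in three cases:
\begin{itemize}
\item $K$ contains no corner of $S$;
\item $K$ contains exactly one corner, since both sides at that corner must then carry an edge of $K$;
\item $K$ contains exactly two diagonally opposite corners.
\end{itemize}
These are precisely the three cases the paper imports from Lemma~2 of Zeng and Chen.

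\textbf{The gap.} The remaining case is two vertices of $K$ at \emph{adjacent} corners of $S$, and your plan never isolates it. Your remark that a corner vertex shared by two sides ``adds to the tally'' is backwards: one vertex then serves two sides, which lowers the count. In this case the overlap conditions hold automatically, since one side is entirely contact. The shear of the other pair is area-neutral. So Lemma~\ref{lem:min_parallelogram} together with local perturbations yields nothing. For example, $S$ is a minimum covering parallelogram of the triangle $(0,0),(1,0),(\tfrac12,1)$, because a triangle fills at most half of any parallelogram containing it. Yet that triangle has only three vertices on $\partial S$.

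\textbf{Why it cannot be repaired.} This case is realized by an optimal configuration. For $n=6$, take the points $(0,0),(1,0),(1,1),(0,1),(\tfrac12,\tfrac14),(\tfrac12,\tfrac34)$. All twenty triangles have area at least $\tfrac18=\Delta_6$; for instance, $(0,0),(1,0),(\tfrac12,\tfrac14)$ attains it. Yet $K=S$ has only four vertices. So the proposition as written is false for $n=6$, and no additional argument along your lines can complete the proof.

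The paper's proof does not run into this only because the three-case classification it quotes omits adjacent common corners. This example shows that classification cannot hold for every optimal configuration. What your method genuinely establishes is the conclusion under the extra hypothesis that no two vertices of $K$ occupy adjacent corners of $S$.
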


\begin{proof}
By Lemma~\ref{lem:min_parallelogram}, $S$ is a minimum-area covering parallelogram of $K$.
Applying Lemma~2 of Zeng and Chen~\cite{ZengChen2011} yields three possible cases:

\begin{enumerate}
\item \textbf{No common vertices.}
Then at least five vertices of $K$ lie on the four edges of $S$.

\item \textbf{Exactly one common vertex.}
Then each adjacent edge interior contains a vertex of $K$,
and the remaining two edges each contain a vertex,
yielding at least five distinct boundary vertices.

\item \textbf{Two common vertices forming a diagonal.}
Then each of the four edges of $S$ contains a vertex of $K$
in its interior.
Together with the two corner vertices,
this yields at least six distinct boundary vertices.
\end{enumerate}

In all cases, $\partial S$ contains at least five distinct vertices of $K$,
hence at least five distinct points of $P$.
\end{proof}

\begin{remark}
  The argument of this subsection is essentially due to Amirali Modir, who
  communicated to us the proof of Proposition~\ref{thm:boundary_vertices}
  building on Lemma~2 of Zeng and Chen~\cite{ZengChen2011}. Our only
  contribution is Lemma~\ref{lem:min_parallelogram}, which shows that the unit
  square is a minimum-area covering parallelogram of the convex hull and thereby
  makes the result of Zeng and Chen directly applicable in our setting.
\end{remark}

\subsection{On the signs of the triangle areas}\label{sec: triangle sign}

The signed area of a triangle with vertices $p_i,p_j,p_k$ is
\[
  A^\pm(i,j,k)
  :=\frac{1}{2}\det\!\begin{pmatrix}
  1 & x_i & y_i\\
  1 & x_j & y_j\\
  1 & x_k & y_k
  \end{pmatrix}
  =\frac{1}{2}\bigl[(x_j-x_i)(y_k-y_i)-(y_j-y_i)(x_k-x_i)\bigr].
\]
It is well known that $A^\pm(i,j,k)>0$ if and only if $(p_i,p_j,p_k)$ are ordered counterclockwise,
$A^\pm(i,j,k)<0$ for a clockwise ordering, and $A^\pm(i,j,k)=0$ for collinear points.
In particular, the (unsigned) area satisfies $A(i,j,k)=|A^\pm(i,j,k)|$,
and for counterclockwise-ordered triples $A(i,j,k)=A^\pm(i,j,k)$.
This sign information plays a key role in the optimization model of Section~\ref{sec:MIPModel}.

\section{Mixed-integer optimization models}\label{sec:MIPModel}

In Section~\ref{sec:MIPmodelVanilla} we present a baseline mixed-integer formulation that mirrors the original nonsmooth model as closely as possible; solving this model already certifies optimality for small instances but becomes intractable around $n=8$.
Section~\ref{sec:MIPmodelImproved} then develops several enhancements---in particular a symmetry-breaking strategy based on boundary structure that, as a byproduct, also fixes the signs of certain triangle areas.
Together, these improvements lead to a much stronger formulation.
The resulting final model is collected in Section~\ref{sec:final-model}, and Section~\ref{sec:comparison-monji} compares it with the formulation of Monji, Modir, and Kocuk~\cite{monji2025solvingheilbronntriangleproblem}.

\subsection{A first mixed-integer formulation}\label{sec:MIPmodelVanilla}

For each triangle $t=(i,j,k) \in T$ we introduce a continuous auxiliary variable
$A_t^{\pm} \in [-\tfrac{1}{2}, \tfrac{1}{2}]$ representing the signed area
\[
A_t^{\pm}
=
\frac{1}{2}\left(
x_i y_j + x_j y_k + x_k y_i
-
x_i y_k - x_j y_i - x_k y_j
\right).
\]
To eliminate the minimum over triangles in the objective, we introduce an auxiliary variable
$z$ and impose
\[
z \le |A_t^{\pm}|, \qquad t \in T.
\]
This formulation remains nonsmooth because of the absolute value.
To handle it, we introduce a binary variable $b_t\in\{0,1\}$ for each triangle $t$
that selects the sign of $A_t^{\pm}$,
and replace the absolute-value constraint by
\[
z \le (2b_t - 1)\, A_t^{\pm}, \qquad t \in T.
\]
Maximizing $z$ forces the model to choose the sign that yields the larger of $-A_t^\pm$ and $A_t^\pm$, thereby mimicking the absolute value 
$|A_t^\pm|=\max(A_t^\pm,-A_t^\pm)$.

Our first baseline mixed-integer optimization model is therefore

\begin{align*}
P_\Delta^0:\qquad
\max_{z,x,y}\quad & z \\
\text{s.t.}\quad 
& (x_i,y_i)\in[0,1]^2, \quad i\in I,\\
& A_t^{\pm}
=
\frac{1}{2}\left(
x_i y_j + x_j y_k + x_k y_i
-
x_i y_k - x_j y_i - x_k y_j
\right), 
\quad t\in T,\\
& z \le (2b_t - 1)\, A_t^{\pm}, \quad t\in T,\\
& A_t^{\pm}\in\left[-\frac{1}{2}, \frac{1}{2}\right],\\
& b_t \in \{0,1\}, \quad t\in T,\\
&  0 \le z \le \frac{1}{2}.
\end{align*}

This formulation is a nonlinear mixed-integer model
and can in principle be solved by any MINLP solver.

\begin{remark}
This baseline model solves the case $n=5$ within seconds using Gurobi and 
proves optimality for $n=6$ in approximately three minutes.
For $n=7$, however, the solver did not terminate within 45 minutes,
illustrating the rapid growth in computational difficulty.
\end{remark}

\subsection{Some improvements}\label{sec:MIPmodelImproved}

We enhance the baseline model with three key improvements.

\subsubsection{Bounds and variable substitution}

We add the constraint $z \le \Delta_{n-1}$ (since solving the problem for $n-1$ points provides an upper bound for $n$ points). Additionally, we introduce auxiliary variables $w_{ij} \in [0,1]$ with $w_{ij} = x_i y_j$ to isolate bilinear terms, which are the main source of nonconvexity and are approximated by solvers using techniques such as McCormick inequalities.

\subsubsection{Symmetry breaking}\label{sec: symmetry breaking}

The primary obstacle is the enormous symmetry: the dihedral group $D_4$ acts on coordinates (8 symmetries: 4 rotations, 4 reflections) and the symmetric group $S_n$ permutes point labels, yielding a combined symmetry group of order $8 \cdot n!$. For $n=9$, this is up to 2.9 million equivalent solutions. 
Our strategy exploits the boundary structure of optimal configurations using Proposition~\ref{thm:boundary_vertices} to break this symmetry in three steps:

\paragraph{Step 1: Boundary assignment and elimination of geometric symmetry.}
We designate the edge carrying at least two boundary vertices as the left edge ($x = 0$), eliminating all rotations in $D_4$. Combined with the constraint $x_2 \le x_4$, this breaks all reflection symmetries.

\paragraph{Step 2: Canonical boundary labeling.}
We label the five boundary points $p_1, \ldots, p_5$ counterclockwise starting from the left edge, assigning them to specific edges: $p_1$ (left, lower), $p_2$ (bottom), $p_3$ (right), $p_4$ (top), $p_5$ (left, upper). The constraint $y_5 \ge y_1$ resolves ambiguity among left-edge points.

\paragraph{Step 3: Interior point ordering.}
The remaining points $p_6, \ldots, p_n$ are ordered by increasing $x$-coordinate, eliminating the $(n-5)!$ labeling symmetries.

This three-step approach fixes five coordinate variables outright and breaks the combined symmetry group, which is crucial for solver performance. 
Table~\ref{tab:fixed-vars} and Figure~\ref{fig:symbreak} illustrate the resulting configuration structure.

\begin{table}[ht]
  \centering
    \begin{tabular}{c c c l}
      \toprule
      Point & Coord.\ & Value & Edge \\
      \midrule
      $p_1$ & $x_1$ & $0$ & left \\
      $p_2$ & $y_2$ & $0$ & bottom \\
      $p_3$ & $x_3$ & $1$ & right \\
      $p_4$ & $y_4$ & $1$ & top \\
      $p_5$ & $x_5$ & $0$ & left \\
      \bottomrule
    \end{tabular}
    \caption{Coordinates fixed a~priori by the symmetry-breaking strategy.
    The remaining free coordinates are $y_1$, $x_2$, $y_3$, $x_4$, $y_5$, and all coordinates of $p_6,\dots,p_n$, subject to $y_5\ge y_1$, $x_2\le x_4$, and $x_6\le\cdots\le x_n$.}
    \label{tab:fixed-vars}
\end{table}

\begin{figure}[ht]
  \centering
  \begin{tikzpicture}[
      scale=6,
      point/.style={circle, fill=black, inner sep=1.3pt},
      lab/.style={font=\large},
      order/.style={-Latex, black, dashed, line width=0.8pt},
      guide/.style={densely dashed, gray!60, line width=0.5pt},
      ineq/.style={gray!70, line width=0.95pt}
  ]
  
  \draw[line width=1pt] (0,0) rectangle (1,1);
  
  \coordinate (p1) at (0,0.20);
  \coordinate (p2) at (0.25,0);
  \coordinate (p3) at (1,0.30);
  \coordinate (p4) at (0.60,1);
  \coordinate (p5) at (0,0.75);
  
  \coordinate (p6) at (0.35,0.35);
  \coordinate (p7) at (0.55,0.45);
  \coordinate (p8) at (0.75,0.4);
  
  \draw[order] (p1) to[bend right=12] (p2);
  \draw[order] (p2) to[bend right=12] (p3);
  \draw[order] (p3) to[bend right=12] (p4);
  \draw[order] (p4) to[bend left=12] (p5);

  \draw[order] (p5) to[bend right=12] (p6);
  
  \draw[order] (p6) to[bend right=12] (p7);
  \draw[order] (p7) to[bend left=12] (p8);
  
  \foreach \i/\p in {1/p1,2/p2,3/p3,4/p4,5/p5,6/p6,7/p7,8/p8}{
      \node[point] at (\p) {};
      \node[lab, anchor=west] at ($(\p)+(0,0.04)$) {$p_{\i}$};
  }
  
  \draw[guide] (p2) -- ($(p2)+(0,1)$);
  \draw[guide] (p4) -- ($(p4)-(0,1)$);
  
\node[font=\large, gray!70] at ($(p2)!0.5!(p4) + (0,0.125)$) {$x_2 \le x_4$};  
\end{tikzpicture}
  
\caption{Symmetry breaking used in the optimization model. 
Five points are fixed on the boundary in counterclockwise order, while the remaining points are ordered by increasing $x$-coordinate; additionally $x_2 \le x_4$.}
\label{fig:symbreak}
\end{figure}
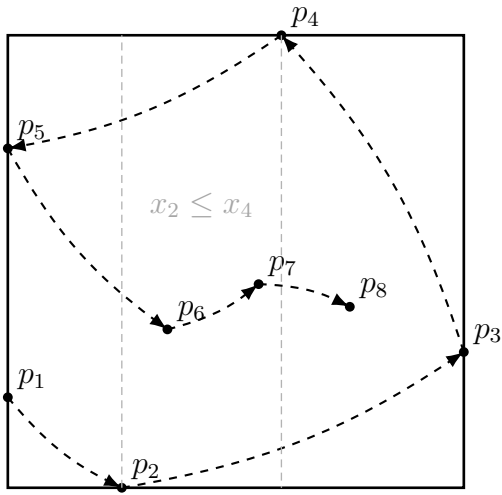

\begin{remark}
  This symmetry-breaking approach also yields correct results for $n<5$, although the theoretical justification does not stem from Proposition~\ref{thm:boundary_vertices}
  but from a separate investigation of the cases $n=3$ and $n=4$. 
\end{remark}

\subsubsection{Sign fixing}

The symmetry breaking approach from Section~\ref{sec: symmetry breaking} allows us to fix the signs for two groups of triangles:
\begin{enumerate}
  \item All triangles in the set $T^+=\{(i,j,k)\in T\mid k\le 5\}$ have their three vertices on the boundary, arranged counterclockwise (since $i<j<k$). Therefore $A_t^\pm\ge 0$ for all $t\in T^+$.
  \item Since $p_1$ and $p_5$ are located on the left edge with $y_5\ge y_1$, we have $A_t^\pm\le 0$ for all $t\in T^-:=\{(i,j,k)\in T\mid i=1,\ j=5\}$.
\end{enumerate}

This allows us to add the constraints
\[
b_t=1,\ t\in T^+\qquad\text{and}\qquad b_t=0,\ t\in T^-
\]
and therefore fix some of the binary variables in advance.
\begin{remark}
  It is also valid to introduce only the binary variables that are not fixed a priori, but we did not experience a computational advantage from that approach.
\end{remark}

  \subsection{Comparison with Monji, Modir, and Kocuk}\label{sec:comparison-monji}

  Concurrent work by Monji, Modir, and Kocuk~\cite{monji2025solvingheilbronntriangleproblem} was the first to apply MINLP solvers to the Heilbronn problem, certifying $n=9$ within one day---a major improvement over prior methods. Our formulation differs in several key respects:
  
\begin{enumerate}
  \item \textbf{Absolute value handling:} We replace $z \le |A_t^\pm|$ with the bilinear constraint $z \le (2b_t-1)A_t^\pm$, avoiding big-$M$ parameters and leveraging the mixed-integer nonlinear programming solver's spatial branching.

  \item \textbf{Symmetry breaking:} We fix five boundary points on specific edges in counterclockwise order eliminating 5 coordinates outright, whereas Monji et~al.\ impose ordering constraints. This also enables sign fixing for $\binom{5}{3} + (n-5)$ binary variables.

  \item \textbf{Minimal auxiliary variables:} We introduce no edge-proximity, strip-assignment, or grid-cell auxiliary binary variables.

  \item \textbf{No pre-processing:} Unlike Monji et~al.\ no optimization-based bound-tightening or other pre-processing techniques are used.
\end{enumerate}

  The result is a more compact model that solves faster (15 minutes for $n=9$ vs.\ one day) without pre-processing.

\subsection{The final model}\label{sec:final-model}

Combining all ingredients from the previous subsection we arrive at the formulation~$P_\Delta^\star$ displayed in Figure~\ref{fig:final-model}.
The colored boxes highlight the individual enhancement groups; the core model corresponds to the baseline formulation~$P_\Delta^0$ expressed in the substituted variables.

  \begin{figure}[h]
    \centering
    
    \begin{align*}
      P_\Delta^\star:\qquad \max_{z,x,y}\quad & z \\[2pt]
      \text{s.t.}\quad
      & \tikzmark{core-tl}(x_i,y_i)\in[0,1]^2,\quad i\in I,\\
      & A_{ijk}^{\pm}\ =\ \frac{1}{2}\left(
        w_{ij} + w_{jk} + w_{ki} - w_{ik} - w_{ji} - w_{kj}
        \right),\quad (i,j,k)\in T\\
      & z\ \le\  (2b_t - 1) A_t^\pm,\quad t\in T\tikzmark{core-br}\\[20pt]
      & \tikzmark{sub-tl}w_{ij}\ =\ x_i y_j,\quad i,j\in I\tikzmark{sub-br}\\[25pt]
      & \tikzmark{sign-tl}b_t=1,\quad t\in T^+\tikzmark{sign-br}\\
      & b_t=0,\quad t\in T^-\\[20pt]
      & \tikzmark{sym-tl}x_1 = x_5 = y_2 = 0\\
      & x_3 = y_4 = 1\\
      & x_2 \le x_4\\
      & y_1 \le y_5\tikzmark{sym-br}\\
      & x_i\ \le\ x_{i+1},\quad i\ge 6\\[20pt]
      & \tikzmark{dom-tl}A_t^{\pm}\in\left[-\frac{1}{2}, \frac{1}{2}\right],\quad t\in T\\
      & w_{ij}\in[0,1],\quad i,j\in I\\
      & b_t \in \{0,1\}, \quad t\in T,\\
      &  0 \le z \le \Delta_{n-1}\tikzmark{dom-br}
    \end{align*}

    \begin{tikzpicture}[remember picture,overlay]
    \tikzset{
      box/.style={
        rounded corners=2pt,
        inner xsep=6pt,
        inner ysep=4pt,
        line width=0.7pt
      },
      label/.style={
        font=\small\bfseries,
        inner sep=1.5pt,
        rounded corners=2pt
      }
    }

\coordinate (CoreBL) at ($(core-tl |- core-br)+(-100pt,-5pt)$);

\coordinate (CoreTR) at ($(core-br)+(180pt,80pt)$);

\path let \p1 = (CoreBL), \p2 = (CoreTR) in
  node[box, draw=blue!65!black,
       anchor=south west,
       minimum width = {\x2-\x1},
       minimum height= {\y2-\y1}] (Bcore) at (CoreBL) {};

\node[label, text=blue!65!black, fill=none, anchor=south]
  at ($(Bcore.north)$) {Core Model};

\coordinate (SubBL) at ($(sub-tl |- sub-br)+(-5pt,-8pt)$);

\coordinate (SubTR) at ($(CoreTR |- sub-br)+(0pt,15pt)$);

\path let \p1 = (SubBL), \p2 = (SubTR) in
  node[box, draw=teal!65!black,
       anchor=south west,
       minimum width = {\x2-\x1},
       minimum height= {\y2-\y1}] (Bsub) at (SubBL) {};

\node[label, text=teal!65!black, fill=none, anchor=south]
  at (Bsub.north) {Product Substitution};

\coordinate (SignBL) at ($(sign-tl |- sign-br)+(-5pt,-25pt)$);
\coordinate (SignTR) at ($(CoreTR |- sign-br)+(0pt,15pt)$);

\path let \p1 = (SignBL), \p2 = (SignTR) in
  node[box, draw=orange!70!black,
       anchor=south west,
       minimum width = {\x2-\x1},
       minimum height= {\y2-\y1}] (Bsign) at (SignBL) {};

\node[label, text=orange!70!black, fill=none, anchor=south]
  at (Bsign.north) {Sign Fixing};

\coordinate (SymBL) at ($(sym-tl |- sym-br)+(-5pt,-25pt)$);
\coordinate (SymTR) at ($(CoreTR |- sym-br)+(0pt,60pt)$);

\path let \p1 = (SymBL), \p2 = (SymTR) in
  node[box, draw=red!70!black, line width=1.1pt,
       anchor=south west,
       minimum width = {\x2-\x1},
       minimum height= {\y2-\y1}] (Bsym) at (SymBL) {};

\node[label, text=red!70!black, fill=none, anchor=south]
  at (Bsym.north) {Symmetry Breaking};

\coordinate (DomBL) at ($(dom-tl |- dom-br)+(-5pt,-8pt)$);
\coordinate (DomTR) at ($(CoreTR |- dom-br)+(0pt,75pt)$);

\path let \p1 = (DomBL), \p2 = (DomTR) in
  node[box, draw=gray!70!black,
       anchor=south west,
       minimum width = {\x2-\x1},
       minimum height= {\y2-\y1}] (Bdom) at (DomBL) {};

\node[label, text=gray!70!black, fill=none, anchor=south]
  at (Bdom.north) {Variable Domains};
    
    \end{tikzpicture}
    
    \caption{The final mixed-integer formulation~$P_\Delta^\star$. Colored boxes indicate the core model~$P_\Delta^0$ (blue) and the enhancements: product substitution (teal), sign fixing (orange), symmetry breaking (red), and variable domains (gray).}
    \label{fig:final-model}
    \end{figure}

  \begin{remark}
    Using this final formulation, the Heilbronn problem for $n=5$ is solved in 0.13 seconds,
    for $n=6$ in 0.7 seconds, for $n=7$ in 1.75 seconds, for $n=8$ in 14.03 seconds, and for 
    $n=9$ in 908 seconds.  \end{remark}

  \section{The optimize-then-refine framework}\label{sec:framework}

  \subsection{The framework}

Our approach proceeds in two stages.
In the first step, we solve the MINLP formulation~$P_\Delta^\star$ from Section~\ref{sec:final-model} to global optimality using Gurobi.
The solver returns a numerical point placement together with matching lower and upper bounds, thereby certifying the optimal value~$\Delta_n$ up to numerical precision which is
why we refer to it as $\varepsilon$-globally optimal.
From these numerical coordinates we identify the \emph{critical triangles}---those whose area equals the certified minimum---and record which points lie on which edges of the unit square.
In the second step, we treat these structural observations as an ansatz for deriving exact symbolic coordinates:
setting the areas of the critical triangles equal yields a system of polynomial equations that can be solved in closed form with a computer algebra system.
The result is a fully symbolic optimal configuration whose optimality is backed by the numerical certificate from the first step.
\begin{remark}
  A key practical concern is the identification of critical triangles and boundary points in the first step, since these decisions rest on numerical data subject to solver tolerances.
In all instances considered here, the gaps are unambiguous:
the area of the smallest non-critical triangle exceeds that of the critical triangles by a margin that is several orders of magnitude larger than the solver tolerance, and every point classified as lying on an edge has a coordinate that deviates from the boundary value by less than~$10^{-6}$. 
This clear separation makes the structural input to Step~2 unambiguous in all cases considered.
\end{remark}

The pipeline is illustrated in Figure~\ref{fig:pipeline}.

\begin{figure}[h]
  \centering
  \begin{tikzpicture}[
    font=\small,
    >=Latex,
    node distance=10mm,
    box/.style={
      draw,
      rounded corners=3mm,
      inner sep=7pt,
      text width=0.60\linewidth, 
      align=left
    },
    boxA/.style={box, fill=blue!6,  draw=blue!60!black},
    boxB/.style={box, fill=gray!8,  draw=gray!70!black},
    boxC/.style={box, fill=green!6, draw=green!60!black},
    arrow/.style={->, line width=0.9pt},
    dashedarrow/.style={->, line width=0.8pt, dashed}
  ]

  \newcommand{\brow}[1]{\textbullet\; #1\\[-1pt]}
  
  \node[boxA] (s1) {
    \textbf{Step 1: Global computational certification}\par\vspace{2pt}
    \begin{tabular}{@{}p{\linewidth}@{}}
      \brow{Create mixed-integer optimization model}
      \brow{Compute global optimal point numerically}
      \brow{Identify critical triangles and boundary points}
    \end{tabular}
  };
  
  \node[boxB, below=of s1] (s2) {
    \textbf{Step 2: Exact analytical refinement}\par\vspace{2pt}
    \begin{tabular}{@{}p{\linewidth}@{}}
      \brow{Derivation of nonlinear polynomial equation systems}
      \brow{Exact symbolic solution of the systems}
    \end{tabular}
  };

  \node[boxC, below=of s2] (result) {
    \textbf{Result:} Exact symbolic coordinates of a point with a global numerical optimality certificate
  };

  \draw[arrow] (s1.south) -- node[right, font=\footnotesize] {certified configurations} (s2.north);
  \draw[arrow] (s2.south) -- (result.north);
  
  \end{tikzpicture}
  
\caption{Methodological pipeline of the paper: global computational certification (Step~1), exact analytical refinement (Step~2), yielding points with exact symbolic coordinates and global numerical optimality certificates.}
  \label{fig:pipeline}
  \end{figure}

While due to the recent advances in global mixed-integer nonlinear programming solvers, Step~1 is a standard optimization task, Step~2 is more involved.
The equal-area conditions on the critical triangles form a system of polynomial equations whose unknowns are the free coordinate parameters remaining after boundary constraints and coordinate coincidences have been exploited.
The number~$k$ of these parameters ranges from~$4$ (for $n=5$) to~$8$ (for $n=9$), and we employ three solving strategies, tried in succession.

\begin{enumerate}
\item \emph{Direct symbolic solve.}
For small systems ($k \le 6$, covering $n=5,6,8$), SymPy's symbolic solver produces exact solutions directly, including the detection of the one-parameter family for $n=6$.
\item \emph{Gröbner basis.}
For $n=7$, the solution lies in a cubic extension field; a lexicographic Gröbner basis computation
yields a triangular system from which all coordinates follow by back-substitution through the minimal polynomial $19f^3 - 27f^2 + 11f - 1 = 0$ (see Section~\ref{sec:illustration-n7}).
\item \emph{Numeric-to-symbolic refinement.}
For $n=9$, where $k=8$, neither a direct solve nor a Gröbner basis computation terminates in reasonable time.
Instead, we apply a numeric-to-symbolic approach: from the certified numerical coordinates we first identify the underlying number field~$\mathbb{Q}(\sqrt{65})$ by recognizing~$\sqrt{65}$ in the simplest coordinate, then express all coordinates as elements of this field using SymPy's \texttt{nsimplify} function with the known algebraic extension.
\end{enumerate}

\noindent
The numeric-to-symbolic step involves a risk:
numerical proximity does not guarantee algebraic correctness, since Gurobi's certified values carry only about six significant digits.
For instance, for $n=9$ the spurious expression $\tfrac{19}{166} + \tfrac{3\sqrt{65}}{166} \approx 0.26017$
is numerically indistinguishable from the true value $\tfrac{9}{16} - \tfrac{3\sqrt{65}}{80} \approx 0.26017$
at solver precision, yet only the latter satisfies the equal-area equations.
We therefore verify every candidate solution symbolically by substituting into the full polynomial system.
Any coordinate that fails this check is corrected by fixing the validated coordinates and solving the resulting reduced (and much smaller) subsystem exactly.

The entire pipeline is implemented as a reproducible Python script, available together with all input data in the accompanying repository.\footnote{\url{https://github.com/spiralulam/heilbronn}}

  \subsection{Illustration of the methodology for $n=7$}
  \label{sec:illustration-n7}

  We now illustrate the two-step pipeline on the case $n=7$, for which
  Comellas and Yebra~\cite{comellasyebra2002} heuristically obtained conjectured optimal
  coordinates via simulated annealing followed by a local optimality analysis.
  Global optimality was subsequently proved by Zeng and Chen~\cite{ZengChen2011}, who decomposed the problem into 226 nonlinear optimization subproblems.
  Our approach provides an independent proof and yields simpler coordinate expressions.

  \subsubsection*{Step 1: Global computational certification}

  Running our final MIP model from Section~\ref{sec:final-model} with
  $n=7$, Gurobi terminates after $1.75$ seconds with an optimal value of $\Delta_7 \approx 0.0838590$ and
  the numerical configuration shown in
  Table~\ref{tab:n7-numerical}.

  \begin{figure}[h]
    \centering
    \begin{minipage}[c]{0.3\linewidth}
      \centering
      \begin{tabular}{c c c c}
      \toprule
      Pt & $x$ & $y$ & Edge \\
      \midrule
      $p_1$ & $0$      & $0.1352$ & left edge \\
      $p_2$ & $0.7127$ & $0$      & bottom edge \\
      $p_3$ & $1$      & $0.1808$ & right edge \\
      $p_4$ & $1$      & $1$      & corner \\
      $p_5$ & $0$      & $1$      & corner \\
      $p_6$ & $0.1939$ & $0.4926$ & interior \\
      $p_7$ & $0.7127$ & $0.5839$ & shared $x$ with $p_2$ \\
      \bottomrule
      \end{tabular}
      \captionof{table}{Numerical coordinates returned by Gurobi for $n=7$
       (rounded to four decimal places). Five of the seven points lie on the
       boundary of the unit square.}
      \label{tab:n7-numerical}
    \end{minipage}\hfill
    \begin{minipage}[c]{0.5\linewidth}
      \centering
      \includegraphics[width=\linewidth,trim=35pt 10pt 35pt 10pt,clip]{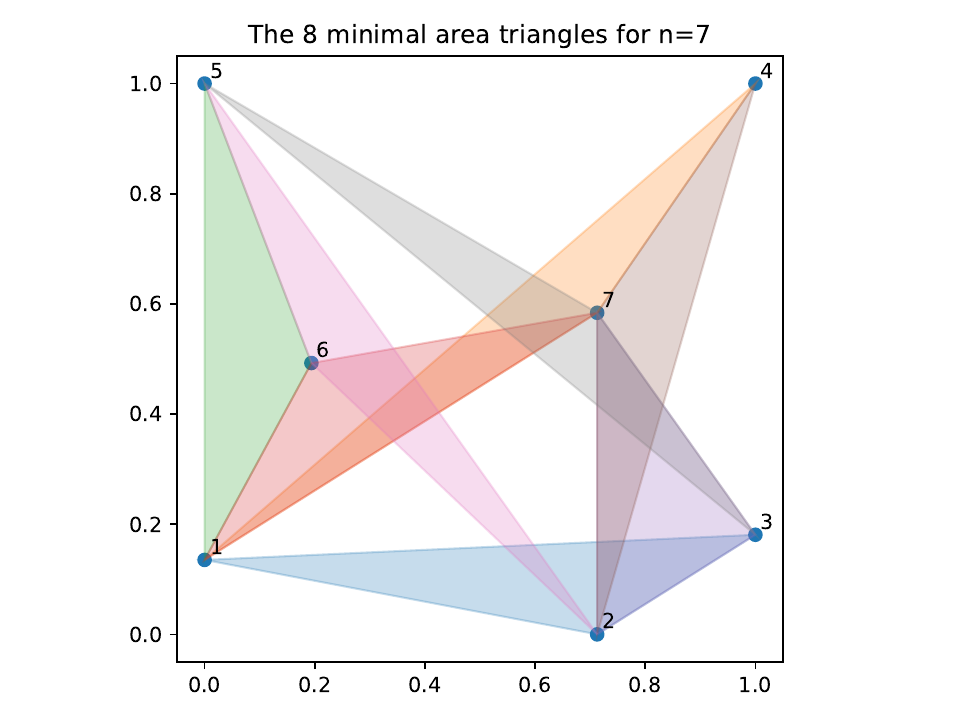}
      \captionof{figure}{Optimal configuration for $n=7$
        with its eight critical triangles highlighted.}
      \label{fig:n7-config-step1}
    \end{minipage}
  \end{figure}

  Figure~\ref{fig:n7-config-step1} shows the configuration
  together with its eight critical triangles, which all have areas within
  $5 \times 10^{-7}$ of the optimal value. The ninth-smallest
  triangle area is roughly $0.0948$, leaving a comfortable gap
  of about $13\%$ above the optimum.

  Two structural observations guide the symbolic step that follows:
  \begin{enumerate}
    \item Five of the seven points lie on the boundary
          ($p_4, p_5$ at corners, $p_1$ on the left edge,
           $p_2$ on the bottom edge, $p_3$ on the right edge).
    \item Points $p_2$ and $p_7$ share the same $x$-coordinate,
          suggesting a hidden algebraic relation.
  \end{enumerate}

  \subsubsection*{Step 2: Exact analytical refinement}

  Guided by these observations, we set up a parametric ansatz.
  The boundary structure fixes seven coordinates
  ($x_1 = 0$, $y_2 = 0$, $x_3 = 1$, $x_4 = y_4 = 1$, $x_5 = 0$, $y_5 = 1$),
  and the shared $x$-coordinate of $p_2$ and $p_7$ introduces an additional coupling,
  leaving six unknowns $a, b, c, d, e, f$ (Table~\ref{tab:n7-parametric}).

  \begin{table}[h]
    \centering
    \begin{tabular}{c c c l}
    \toprule
    Pt & $x$ & $y$ & Rationale \\
    \midrule
    $p_1$ & $0$ & $a$ & left edge \\
    $p_2$ & $b$ & $0$ & bottom edge \\
    $p_3$ & $1$ & $c$ & right edge \\
    $p_4$ & $1$ & $1$ & corner \\
    $p_5$ & $0$ & $1$ & corner \\
    $p_6$ & $d$ & $e$ & interior \\
    $p_7$ & $b$ & $f$ & shared $x$ with $p_2$ \\
    \bottomrule
    \end{tabular}
    \caption{Parametric ansatz for $n=7$. Boundary constraints
    fix seven coordinates; the shared $x$-coordinate of $p_2$
    and $p_7$ introduces the coupling $x_7 = b$.}
    \label{tab:n7-parametric}
  \end{table}

  Setting the areas of the eight critical triangles equal yields
  seven polynomial equations in six unknowns.
  A lexicographic Gröbner basis computation eliminates all unknowns
  except~$f$, which must satisfy the irreducible cubic
  \[
    19\,f^{3} - 27\,f^{2} + 11\,f - 1 \;=\; 0.
  \]
  This cubic has three real roots:
  $f_1 \approx 0.1269$,
  $f_2 \approx 0.5839$, and
  $f_3 \approx 0.7103$.
  The first root gives $b = 19f_1^2 - 27f_1 + 10 \approx 6.88 > 1$,
  and the third gives $a = 19f_3^2 - 16f_3 + 3 \approx 1.22 > 1$;
  both violate the unit-square constraint.
  The unique feasible solution corresponds to $f = f_2$.
  Back-substitution through the Gröbner basis yields the
  exact symbolic coordinates in Table~\ref{tab:n7-exact}.

  \begin{table}[h]
    \centering
    \begin{tabular}{c c c}
    \toprule
    Pt & $x$ & $y$ \\
    \midrule
    $p_1$ & $0$ & $19f^{2} - 16f + 3$ \\[6pt]
    $p_2$ & $19f^{2} - 27f + 10$ & $0$ \\[6pt]
    $p_3$ & $1$ & $\displaystyle \frac{-19f^{2} + 10f + 1}{2}$ \\[10pt]
    $p_4$ & $1$ & $1$ \\[6pt]
    $p_5$ & $0$ & $1$ \\[6pt]
    $p_6$ & $-19f^{2} + 8f + 2$ & $57f^{2}-41f+5$ \\[6pt]
    $p_7$ & $19f^{2} - 27f + 10$ & $f$ \\[6pt]
    \bottomrule
    \end{tabular}
    \caption{Exact symbolic coordinates of the optimal seven-point
    configuration, where $f \approx 0.5839$ is the second real root of
    $19f^3-27f^2+11f-1$.}
    \label{tab:n7-exact}
  \end{table}

  The optimal value simplifies to
  \[
    \Delta_7 \;=\; f - \tfrac{1}{2}
              \;\approx\; 0.0838590090\ldots
  \]

  \paragraph{Comparison with Comellas and Yebra~\cite{comellasyebra2002}.}
  The exact coordinates we obtain are notably simpler than those
  reported by Comellas and Yebra, who express
  each coordinate as a degree-two polynomial in a parameter~$z$
  with rational coefficients over denominators~$19$ and~$38$
  and integer numerators as large as~$223$, for
  instance $x_1 = -\tfrac{50}{19}\,z - \tfrac{17}{38}\,z^{2}
  + \tfrac{37}{38}$.
  
\section{Optimal solutions for $5\le n\le 9$}\label{sec:solutions}

    \subsection{Software and hardware}

  The models are implemented in Python~3.11 using the Gurobi~13.0 solver interface (\texttt{gurobipy}); the exact symbolic refinements use SymPy~1.14.
  Gurobi's only non-default setting is \texttt{MIPFocus\,=\,2} to prioritise the dual bound.
Computations were carried out on a standard desktop PC equipped with an AMD Ryzen~9 9950X3D processor (16~cores, 4.30\,GHz) and 96\,GB DDR5 RAM.
  
  \subsection{$n=5$: Four critical triangles, $\Delta_5 = \frac{\sqrt{3}}{9}$}
  
  The configuration has four critical triangles.
  Setting these four areas equal yields a system whose unique feasible solution gives the closed-form
  optimal value $\Delta_5 = \sqrt{3}/9 \approx 0.19245$.
  
  \begin{figure}[h]
    \centering
    \begin{minipage}{0.48\linewidth}\centering
      \begin{tabular}{c c c}
      \toprule
      Pt & $x$ & $y$ \\
      \midrule
      $1$ & $0$ & $\frac{1}{3}$ \\[4pt]
      $2$ & $\frac{\sqrt{3}}{3}$ & $0$ \\[4pt]
      $3$ & $1$ & $1 - \frac{\sqrt{3}}{3}$ \\[4pt]
      $4$ & $\frac{2}{3}$ & $1$ \\[4pt]
      $5$ & $0$ & $1$ \\
      \bottomrule
      \end{tabular}
      \captionof{table}{Exact coordinates for $n=5$.}
      \label{tab:n5-exact-compact}
    \end{minipage}\hfill
    \begin{minipage}{0.35\linewidth}\centering
      \includegraphics[width=\linewidth,trim=35pt 10pt 35pt 10pt,clip]{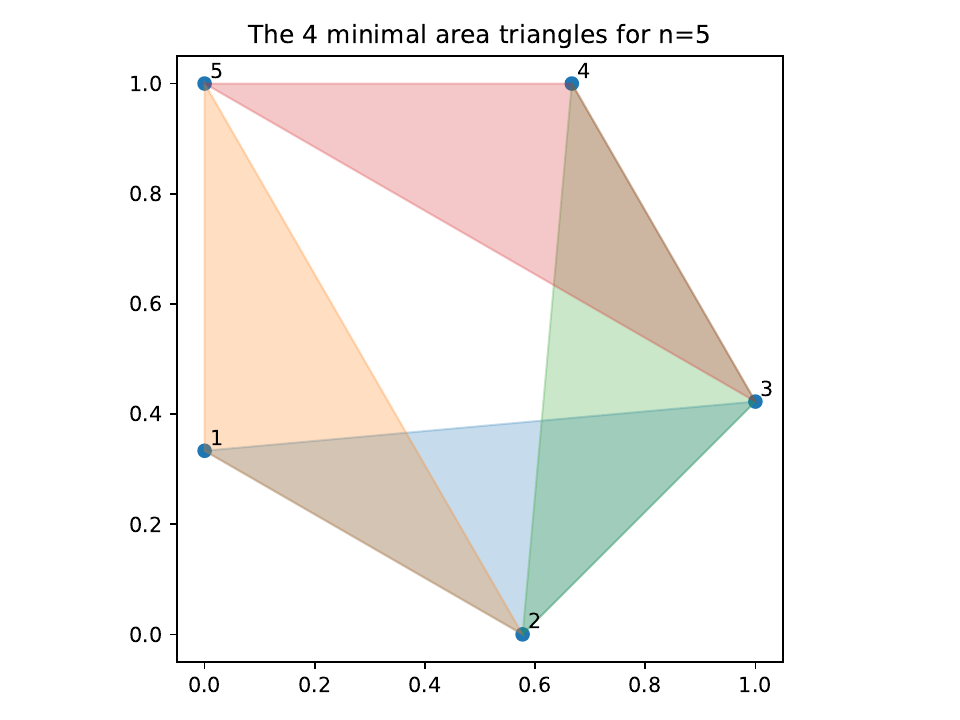}
      \captionof{figure}{Optimal configuration for $n=5$.}
      \label{fig:n5-config}
    \end{minipage}
  \end{figure}

  \subsection{$n=6$: Six critical triangles, $\Delta_6 = \frac{1}{8}$}
  
  The configuration has six critical triangles.
  Remarkably, there is a one-parameter family of optimal solutions parameterized by $c \in [0, \tfrac{1}{4}]$,
  all achieving $\Delta_6 = 1/8 = 0.125$. The full distribution of triangle areas is independent of the
  choice of~$c$.
  
  \begin{figure}[h]
    \centering
    \begin{minipage}{0.48\linewidth}\centering
      \begin{tabular}{c c c}
      \toprule
      Pt & $x$ & $y$ \\
      \midrule
      $1$ & $0$ & $c$ \\[4pt]
      $2$ & $\frac{1}{2}$ & $0$ \\[4pt]
      $3$ & $1$ & $\frac{1}{2} - c$ \\[4pt]
      $4$ & $\frac{1}{2}$ & $1$ \\[4pt]
      $5$ & $0$ & $c + \frac{1}{2}$ \\[4pt]
      $6$ & $1$ & $1-c$ \\
      \bottomrule
      \end{tabular}
      \captionof{table}{Exact coordinates for $n=6$, $c\in[0,\tfrac{1}{4}]$.}
      \label{tab:n6-exact-compact}
    \end{minipage}\hfill
    \begin{minipage}{0.35\linewidth}\centering
      \includegraphics[width=\linewidth,trim=35pt 10pt 35pt 10pt,clip]{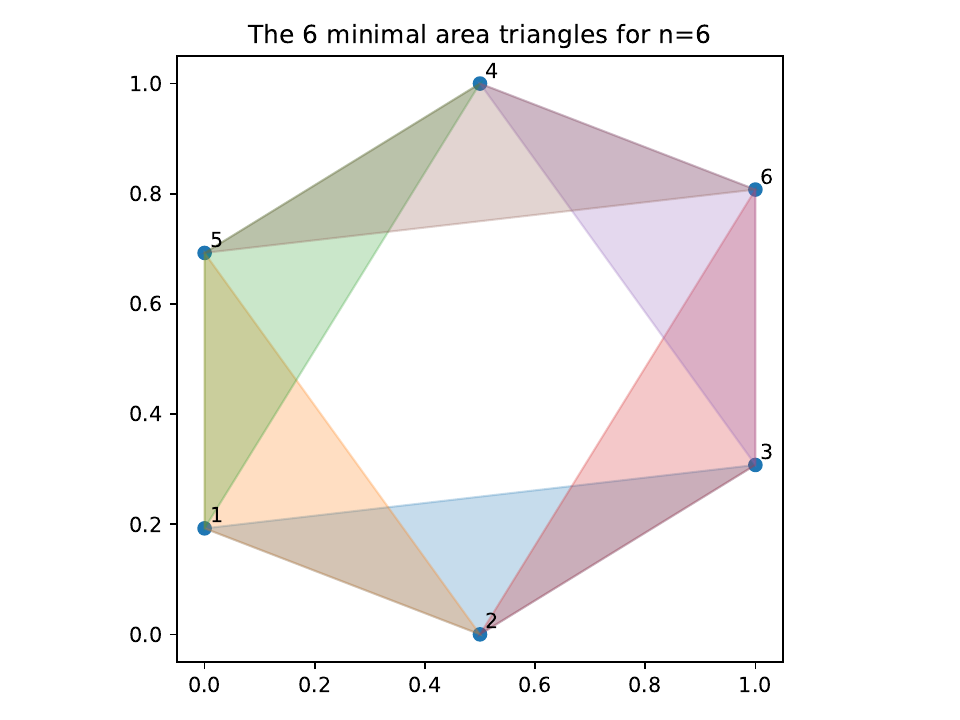}
      \captionof{figure}{Optimal configuration for $n=6$.}
      \label{fig:n6-config}
    \end{minipage}
  \end{figure}

  \subsection{$n=7$: Eight critical triangles, $\Delta_7 = f - \tfrac{1}{2}$}
  
  The detailed derivation for $n=7$ is given in Section~\ref{sec:illustration-n7}.
  The configuration has eight critical triangles, and the exact coordinates are parameterized by
  the root of $19f^3-27f^2+11f-1$ near $0.5839$, yielding $\Delta_7 \approx 0.08386$.
  
  \begin{figure}[h]
    \centering
    \begin{minipage}{0.35\linewidth}\centering
      \begin{tabular}{c c c}
      \toprule
      Pt & $x$ & $y$ \\
      \midrule
      $1$ & $0$ & $19f^{2}-16f+3$ \\[4pt]
      $2$ & $19f^{2}-27f+10$ & $0$ \\[4pt]
      $3$ & $1$ & $\frac{-19f^{2}+10f+1}{2}$ \\[6pt]
      $4$ & $1$ & $1$ \\[4pt]
      $5$ & $0$ & $1$ \\[4pt]
      $6$ & $-19f^{2}+8f+2$ & $57f^{2}-41f+5$ \\[4pt]
      $7$ & $19f^{2}-27f+10$ & $f$ \\
      \bottomrule
      \end{tabular}
      \captionof{table}{Exact coordinates for $n=7$.}
      \label{tab:n7-exact-compact}
    \end{minipage}\hfill
    \begin{minipage}{0.4\linewidth}\centering
      \includegraphics[width=\linewidth,trim=35pt 10pt 35pt 10pt,clip]{figs/fig_n7.pdf}
      \captionof{figure}{Optimal configuration for $n=7$.}
      \label{fig:n7-config}
    \end{minipage}
  \end{figure}

  \subsection{$n=8$: Twelve critical triangles, $\Delta_8 = -\frac{1}{36}+\frac{\sqrt{13}}{36}$}
  
  The configuration exhibits a point-symmetric
  structure with twelve critical triangles and a unique optimal solution involving $\sqrt{13}$.
  
  \begin{figure}[h]
    \centering
    \begin{minipage}{0.35\linewidth}\centering
      \begin{tabular}{c c c}
      \toprule
      Pt & $x$ & $y$ \\
      \midrule
      $1$ & $0$ & $0$ \\[4pt]
      $2$ & $\frac{1}{6}+\frac{\sqrt{13}}{6}$ & $0$ \\[4pt]
      $3$ & $1$ & $\frac{7}{18}-\frac{\sqrt{13}}{18}$ \\[4pt]
      $4$ & $1$ & $1$ \\[4pt]
      $5$ & $0$ & $\frac{11}{18}+\frac{\sqrt{13}}{18}$ \\[4pt]
      $6$ & $\frac{5}{6}-\frac{\sqrt{13}}{6}$ & $1$ \\[4pt]
      $7$ & $\frac{5}{6}-\frac{\sqrt{13}}{6}$ & $\frac{7}{9}-\frac{\sqrt{13}}{9}$ \\[4pt]
      $8$ & $\frac{1}{6}+\frac{\sqrt{13}}{6}$ & $\frac{2}{9}+\frac{\sqrt{13}}{9}$ \\
      \bottomrule
      \end{tabular}
      \captionof{table}{Exact coordinates for $n=8$.}
      \label{tab:n8-exact-compact}
    \end{minipage}\hfill
    \begin{minipage}{0.4\linewidth}\centering
      \includegraphics[width=\linewidth,trim=35pt 10pt 35pt 10pt,clip]{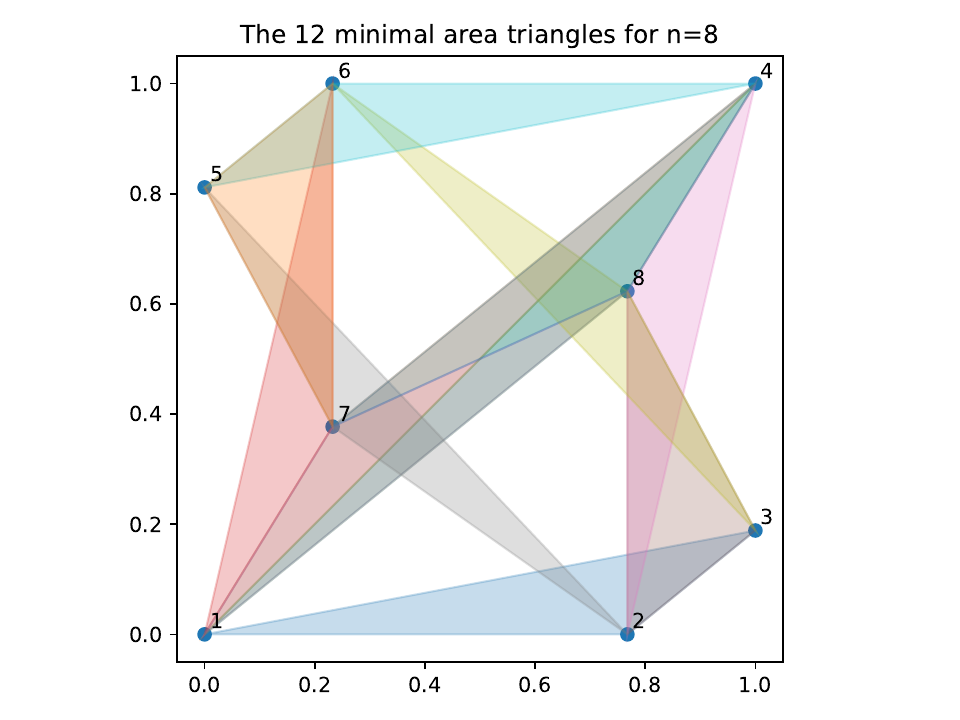}
      \captionof{figure}{Optimal configuration for $n=8$.}
      \label{fig:n8-config}
    \end{minipage}
  \end{figure}

  \subsection{$n=9$: Eleven critical triangles, $\Delta_9 = -\frac{11}{64}+\frac{9\sqrt{65}}{320}$}
  
  The configuration has eleven critical triangles
  and is invariant under reflection in the anti-diagonal $(x,y)\mapsto(1-y,\,1-x)$. The unique optimal solution involves $\sqrt{65}$.
  
  \begin{figure}[h]
    \centering
    \begin{minipage}{0.48\linewidth}\centering
      \begin{tabular}{c c c}
      \toprule
      Pt & $x$ & $y$ \\
      \midrule
      $1$ & $0$ & $1-\frac{\sqrt{65}}{10}$ \\[4pt]
      $2$ & $\frac{3}{8}-\frac{\sqrt{65}}{40}$ & $0$ \\[4pt]
      $3$ & $1$ & $\frac{9}{16}-\frac{3\sqrt{65}}{80}$ \\[4pt]
      $4$ & $\frac{3}{8}-\frac{\sqrt{65}}{40}$ & $1$ \\[4pt]
      $5$ & $0$ & $\frac{5}{8}+\frac{\sqrt{65}}{40}$ \\[4pt]
      $6$ & $\frac{1}{4}+\frac{\sqrt{65}}{20}$ & $\frac{3}{4}-\frac{\sqrt{65}}{20}$ \\[4pt]
      $7$ & $\frac{7}{16}+\frac{3\sqrt{65}}{80}$ & $0$ \\[4pt]
      $8$ & $\frac{\sqrt{65}}{10}$ & $1$ \\[4pt]
      $9$ & $1$ & $\frac{5}{8}+\frac{\sqrt{65}}{40}$ \\
      \bottomrule
      \end{tabular}
      \captionof{table}{Exact coordinates for $n=9$.}
      \label{tab:n9-exact-compact}
    \end{minipage}\hfill
    \begin{minipage}{0.48\linewidth}\centering
      \includegraphics[width=\linewidth,trim=35pt 10pt 35pt 10pt,clip]{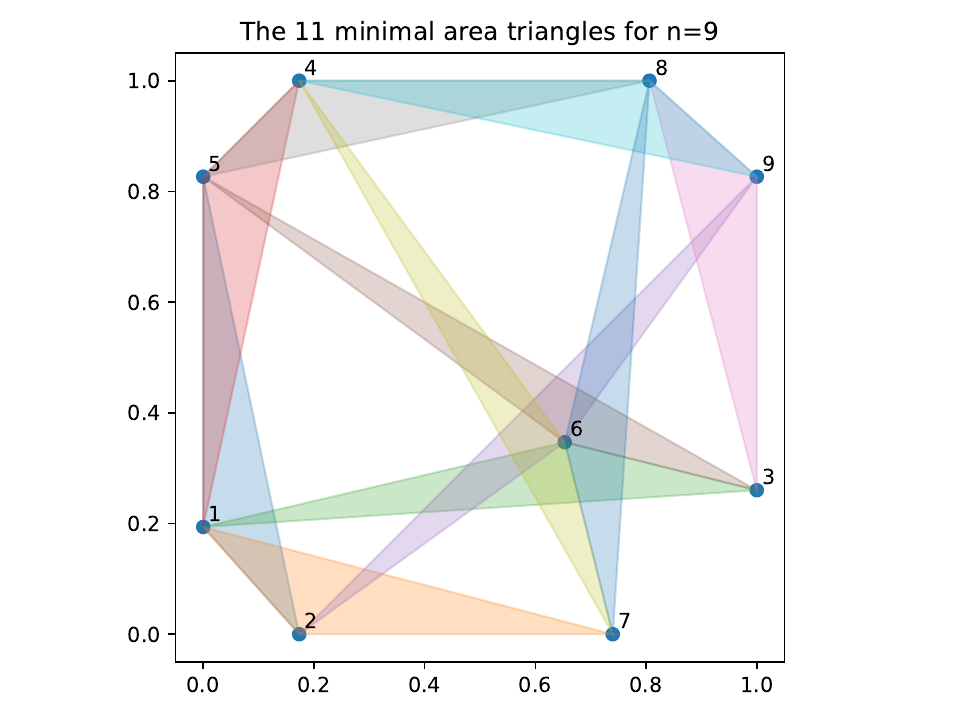}
      \captionof{figure}{Optimal configuration for $n=9$.}
      \label{fig:n9-config}
    \end{minipage}
  \end{figure}

\section{Further observations and research questions}
\label{sec:observations}

\subsection{Growth of the number of critical triangles}
\label{sec:critical-triangle-count}

A structural feature of the optimal configurations worth noting is the number of
critical triangles, i.e., triangles whose area equals the minimum.
Table~\ref{tab:critical-counts} and Figure~\ref{fig:critical-count-plot}
summarize the counts for $n=5,\ldots,9$.

\begin{figure}[h]
  \centering
  \begin{minipage}{0.35\linewidth}\centering
  \begin{tabular}{c c}
  \toprule
  $n$ & Critical triangles \\
  \midrule
  5 & 4 \\
  6 & 6 \\
  7 & 8 \\
  8 & 12 \\
  9 & 11 \\
  \bottomrule
  \end{tabular}
  \captionof{table}{Number of critical triangles for $n = 5, \ldots, 9$.}
  \label{tab:critical-counts}
  \end{minipage}\hfill
  \begin{minipage}{0.55\linewidth}\centering
  \begin{tikzpicture}
    \begin{axis}[
      width=0.92\linewidth,
      height=0.6\linewidth,
      xlabel={$n$},
      ylabel={\# critical triangles},
      xtick={5,6,7,8,9},
      ytick={0,2,4,6,8,10,12},
      ymin=0, ymax=14,
      xmin=4.5, xmax=9.5,
      grid=major,
      grid style={gray!30},
      mark size=3pt,
      every axis plot/.append style={thick},
    ]
    \addplot[color=black, mark=*, mark options={fill=black}]
      coordinates {(5,4) (6,6) (7,8) (8,12) (9,11)};
    \end{axis}
  \end{tikzpicture}
  \captionof{figure}{Critical triangles as a function of~$n$.}
  \label{fig:critical-count-plot}
  \end{minipage}
\end{figure}

The data suggest a broadly increasing trend, although the decrease from $n=8$
to $n=9$ shows that the growth is not strictly monotone.  A natural question
arises:

\begin{quote}
\emph{Can one establish a nontrivial lower bound on the number of critical
triangles in an optimal Heilbronn configuration as a function of~$n$?}
\end{quote}

Any such bound would have implications for the structure of the polynomial
systems that arise in Step~2 of the framework, since each critical triangle
contributes an equality constraint.  Even a linear lower bound of the form
$\Omega(n)$ would be significant, as it would guarantee that optimal
configurations are constrained by at least as many active area equalities as
there are free coordinate variables.

\begin{remark}
  We thank Oliver Stein for pointing out that the number of critical triangles should be bounded above if the linear independence constraint qualification (LICQ) holds at the optimal solution, since each critical triangle corresponds to an active constraint.  However, we do not know whether LICQ holds for optimal Heilbronn configurations.
\end{remark}

\subsection{Clustering of noncritical triangle areas}
\label{sec:noncritical-clustering}

A second observation concerns the \emph{noncritical}
triangles---those whose area strictly exceeds the minimum.  Inspecting the
degeneracy plots (Figures~\ref{fig: degen n=5} and~\ref{fig: degen n=8}), one
notices that the triangle areas do not spread out continuously but instead
cluster around a small number of distinct values.

For $n=5$, there are only three distinct noncritical area levels beyond the
four critical triangles (see Figure~\ref{fig: degen n=5}).
For $n=8$, the pattern is particularly clear: after the twelve critical
triangles, the remaining $\binom{8}{3} - 12 = 44$ triangle areas visibly
group into a handful of narrow clusters (see Figure~\ref{fig: degen n=8}).
Similar clustering is observed for $n=6$, $n=7$, and $n=9$.

\begin{figure}[h]
  \centering
  \begin{minipage}{0.48\linewidth}\centering
    \includegraphics[width=\linewidth]{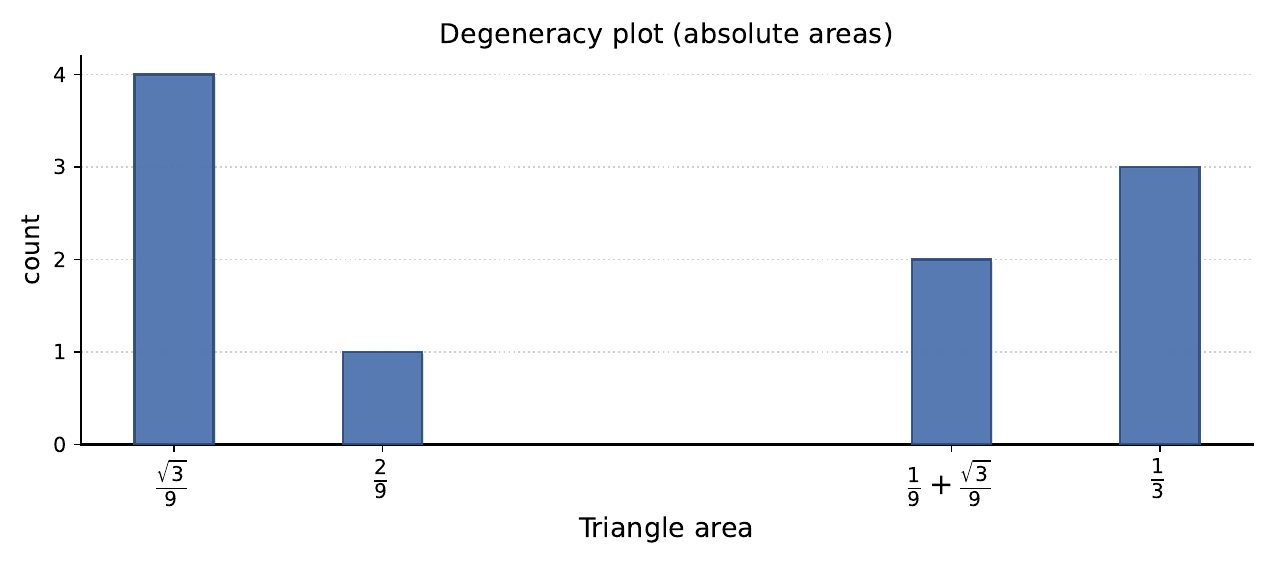}
    \captionof{figure}{Degeneracy plot for $n=5$: all $\binom{5}{3}=10$ triangle areas sorted in ascending order.  Four critical triangles are followed by three clearly separated noncritical area levels.}
    \label{fig: degen n=5}
  \end{minipage}\hfill
  \begin{minipage}{0.48\linewidth}\centering
    \includegraphics[width=\linewidth]{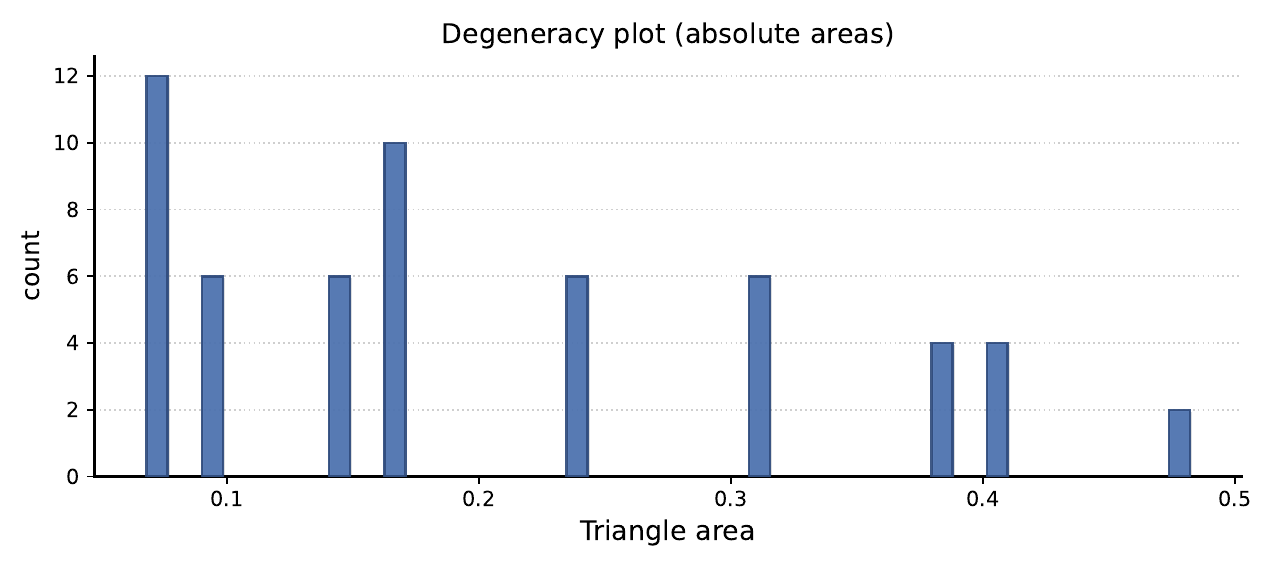}
    \captionof{figure}{Degeneracy plot for $n=8$: all $\binom{8}{3}=56$ triangle areas sorted in ascending order.  Twelve critical triangles are followed by distinctly clustered noncritical area levels.}
    \label{fig: degen n=8}
  \end{minipage}
\end{figure}

To the best of our knowledge, this clustering phenomenon has not been noted in
the literature.  This suggests a structural rigidity in the extremal configurations
that goes beyond the optimality of the minimum area.

\begin{quote}
\emph{Can one explain the clustering of noncritical triangle areas in optimal
Heilbronn configurations?  In particular, is it possible to show that, for each~$n$,
the $\binom{n}{3}$ triangle areas of an optimal configuration take only
$O(n)$ distinct values?}
\end{quote}

A positive answer would reveal strong hidden symmetries in the optimal
configurations and could considerably simplify the algebraic systems used to
derive exact coordinates.

\section{Outlook and final remarks}\label{sec:outlook}

We have presented an \emph{optimize--then--refine} framework for the
Heilbronn triangle problem on the unit square that combines a
mixed-integer optimization model with exact symbolic computation.
A symmetry-breaking strategy and a product-form determinant
reformulation yield a model that Gurobi solves to certified global
optimality for all $n \le 9$, and the subsequent symbolic step
recovers exact algebraic coordinates from the numerical certificate.

Several directions remain open.
Extending the global certification to $n = 10$ appears within reach
given continued improvements in solver technology and hardware,
but will require substantial computational effort.
On the theoretical side, establishing nontrivial bounds on
the number of critical triangles (Section~\ref{sec:critical-triangle-count})
or explaining the clustering of noncritical triangle areas
(Section~\ref{sec:noncritical-clustering}) would deepen our
understanding of optimal Heilbronn configurations.

More broadly, the two-step methodology illustrated here---global
certification via mixed-integer programming followed by exact
coordinate recovery through computer algebra---is applicable
to other packing and covering problems in discrete geometry
where the conjectured optimal configurations involve
algebraic numbers of moderate degree.

\section*{Acknowledgments}
We thank Erich Friedman for kindly sharing the previously unpublished
best-known configurations of Peter Karpov ($n=13,15$) and Mark Beyleveld
($n=14,16$) reproduced in Appendix~\ref{app:best-known}.

\printbibliography

\appendix\label{appendix}

\section{Best-known configurations for $10\le n\le 16$}
\label{app:best-known}

For completeness, we collect the best-known configurations for $10\le n\le 16$.
Those for $n=10$ and $n=12$ are due to Comellas and Yebra~\cite{comellasyebra2002},
the one for $n=11$ to Goldberg~\cite{goldberg1972maximizing}, and the remaining
ones to Peter Karpov ($n=13,15$) and Mark Beyleveld ($n=14,16$), all unpublished.
The exact coordinates are listed in
Tables~\ref{tab:bestknown}--\ref{tab:bestknown-1516}; the parameters appearing in
the $n=10$ and $n=12$ entries are defined next.

\begin{itemize}
  \item For $n=10$, let $z = \frac{3}{4} - \frac{(63+8\sqrt{62})^{1/3}}{12} - \frac{1}{12(63+8\sqrt{62})^{1/3}} \approx 0.3156$,
$x = z/2$, and $y = 1-3z+2z^2$.
\item For $n=12$, let $x = 1 - \frac{(27+3\sqrt{57})^{2/3}+6}{6(27+3\sqrt{57})^{1/3}} \approx 0.1154$
and $y = 2x^2-3x+\frac{1}{2}$.
\end{itemize}

\begin{table}[ht]
\centering
\caption{Best-known configurations for $n=10,\ldots,16$.
The configurations for $n=10,12$ are due to Comellas and Yebra~\cite{comellasyebra2002},
$n=11$ to Goldberg~\cite{goldberg1972maximizing},
$n=13,15$ to Peter Karpov (unpublished),
and $n=14,16$ to Mark Beyleveld (unpublished).}
\label{tab:bestknown}
{\small
\begin{tabular}{r c@{\hskip 6pt} c @{\hskip 14pt} r c@{\hskip 6pt} c @{\hskip 14pt} r r@{\hskip 6pt} r}
\toprule
\multicolumn{3}{c}{$n=10$} & \multicolumn{3}{c}{$n=11$} & \multicolumn{3}{c}{$n=12$} \\
\# & $x$ & $y$ & \# & $x$ & $y$ & \# & $x$ & $y$ \\
\midrule
$0$ & $x$ & $0$       & $0$ & $1/3$ & $0$     & $0$ & $x$ & $0$ \\
$1$ & $1{-}y$ & $0$    & $1$ & $2/3$ & $0$     & $1$ & $1{-}x$ & $0$ \\
$2$ & $0$ & $x$        & $2$ & $0$ & $2/9$     & $2$ & $0$ & $x$ \\
$3$ & $1$ & $y$        & $3$ & $1$ & $2/9$     & $3$ & $1$ & $x$ \\
$4$ & $1{-}z$ & $z$    & $4$ & $1/3$ & $4/9$   & $4$ & $1/2$ & $y$ \\
$5$ & $z$ & $1{-}z$    & $5$ & $2/3$ & $4/9$   & $5$ & $y$ & $1/2$ \\
$6$ & $0$ & $1{-}y$    & $6$ & $0$ & $2/3$     & $6$ & $1{-}y$ & $1/2$ \\
$7$ & $1$ & $1{-}x$    & $7$ & $1$ & $2/3$     & $7$ & $1/2$ & $1{-}y$ \\
$8$ & $y$ & $1$        & $8$ & $1/2$ & $7/9$   & $8$ & $0$ & $1{-}x$ \\
$9$ & $1{-}x$ & $1$    & $9$ & $1/6$ & $1$     & $9$ & $1$ & $1{-}x$ \\
   &     &             & $10$ & $5/6$ & $1$    & $10$ & $x$ & $1$ \\
   &     &             &      &       &        & $11$ & $1{-}x$ & $1$ \\
\midrule
\multicolumn{3}{c}{$\Delta_{10} \ge \tfrac{5}{8}z^2 {-} \tfrac{1}{2}z^3 \approx 0.04654$}
& \multicolumn{3}{c}{$\Delta_{11} \ge 1/27 \approx 0.03704$}
& \multicolumn{3}{c}{$\Delta_{12} \ge \tfrac{1}{4}x {+} \tfrac{1}{2}xy {-} \tfrac{1}{2}x^2 \approx 0.03260$} \\
\bottomrule
\end{tabular}}
\end{table}

\begin{table}[ht]
\centering
\caption{Best-known configurations for $n=13$ and $n=14$.}
\label{tab:bestknown-1314}
{\small
\begin{tabular}{r r@{\hskip 6pt} r @{\hskip 14pt} r r@{\hskip 6pt} r}
\toprule
\multicolumn{3}{c}{$n=13$} & \multicolumn{3}{c}{$n=14$} \\
\# & $x$ & $y$ & \# & $x$ & $y$ \\
\midrule
0 & 0.964815 & 0.087630 & 0 & 0.077620 & 0 \\
1 & 0 & 1               & 1 & 0.922380 & 1 \\
2 & 0.896939 & 0.902546 & 2 & 0.922380 & 0 \\
3 & 0.761346 & 0.441996 & 3 & 0.077620 & 1 \\
4 & 0.655161 & 1         & 4 & 0 & 0.186886 \\
5 & 0.748551 & 0         & 5 & 1 & 0.813114 \\
6 & 0 & 0.099250         & 6 & 1 & 0.186886 \\
7 & 1 & 0.461332         & 7 & 0 & 0.813114 \\
8 & 0.328490 & 0.633357 & 8 & 0.292333 & 0.321345 \\
9 & 0.087939 & 0.614507 & 9 & 0.707667 & 0.678655 \\
10 & 0.345014 & 0.901507 & 10 & 0.707667 & 0.321345 \\
11 & 0.087938 & 0         & 11 & 0.292333 & 0.678655 \\
12 & 0.500181 & 0.149235 & 12 & 0.5 & 0.138278 \\
   &          &           & 13 & 0.5 & 0.861722 \\
\midrule
\multicolumn{3}{c}{$\Delta_{13} \ge 0.02702$}
& \multicolumn{3}{c}{$\Delta_{14} \ge 0.02430$} \\
\bottomrule
\end{tabular}}
\end{table}

\begin{table}[ht]
\centering
\caption{Best-known configurations for $n=15$ and $n=16$. All coordinates for $n=16$ are rational.}
\label{tab:bestknown-1516}
{\small
\begin{tabular}{r r@{\hskip 6pt} r @{\hskip 14pt} r r@{\hskip 6pt} r}
\toprule
\multicolumn{3}{c}{$n=15$} & \multicolumn{3}{c}{$n=16$} \\
\# & $x$ & $y$ & \# & $x$ & $y$ \\
\midrule
0 & 0.934094 & 1         & 0 & $2/31$ & $0$ \\
1 & 0.287119 & 0.302829 & 1 & $29/31$ & $1$ \\
2 & 0.342286 & 0.701349 & 2 & $23/31$ & $0$ \\
3 & 0.963064 & 0.095730 & 3 & $8/31$ & $1$ \\
4 & 0.066630 & 0.633568 & 4 & $0$ & $10/33$ \\
5 & 0.648909 & 0         & 5 & $1$ & $23/33$ \\
6 & 0.277707 & 1         & 6 & $1$ & $2/33$ \\
7 & 0.066641 & 0         & 7 & $0$ & $31/33$ \\
8 & 0.589972 & 0.272487 & 8 & $8/31$ & $4/11$ \\
9 & 0.603055 & 0.928222 & 9 & $23/31$ & $7/11$ \\
10 & 0.895664 & 0.684290 & 10 & $10/31$ & $2/33$ \\
11 & 0 & 0.192215        & 11 & $21/31$ & $31/33$ \\
12 & 0.670814 & 0.614942 & 12 & $21/31$ & $10/33$ \\
13 & 0 & 0.924975        & 13 & $10/31$ & $23/33$ \\
14 & 1 & 0.399875        & 14 & $29/31$ & $4/11$ \\
   &   &                 & 15 & $2/31$ & $7/11$ \\
\midrule
\multicolumn{3}{c}{$\Delta_{15} \ge 0.02111$}
& \multicolumn{3}{c}{$\Delta_{16} \ge 7/341 \approx 0.02053$} \\
\bottomrule
\end{tabular}}
\end{table}

\end{document}